\documentclass[12pt]{amsart}
\usepackage{preamble}

\usepackage[headheight=15pt, headsep=15pt, footskip=27pt, bottom=2.4cm, left=2.4cm, right=2.4cm]{geometry}

\newcommand{\Alb}{\operatorname{Alb}}
\newcommand{\NS}{\operatorname{NS}}
\newcommand{\NE}{\overline{\operatorname{NE}}}
\newcommand{\edim}{\operatorname{edim}}
\newcommand{\MW}{\operatorname{MW}}
\newcommand{\Sym}{\operatorname{Sym}}

\newcommand{\Tot}{\operatorname{Tot}}
\usepackage{lineno}

\begin{document}

\title[Log canonical models of a fixed variety with varying boundaries]{Log canonical models of a fixed variety with varying boundaries}

\subjclass[2020]{14E30, 14J32}
\keywords{log canonical model, minimal model program, Morrison-Kawamata cone conjecture, Severi-Maehara theorem}
\date{}

\begin{abstract}
Motivated by extending the Morrison-Kawamata cone conjecture beyond Calabi-Yau varieties and Severi-Maehara type finiteness results to targets not necessarily of general type, we fix a smooth projective variety and study the finiteness of its log canonical models as klt boundaries vary. Our main result establishes finiteness for every smooth projective minimal surface. For each $\kappa\in\{-\infty,0,1\}$, we construct a smooth projective non-minimal surface of Kodaira dimension $\kappa$ with infinitely many log canonical models, showing that the minimality assumption cannot be omitted in general. We also investigate possible extensions of this finiteness result to higher dimensions.
\end{abstract}

\author{Xingying Li}
\address[Xingying Li]{Department of Artificial Intelligence, Westlake University, Hangzhou 310030, China}
\email{xingyinglimath@gmail.com, lixingying@westlake.edu.cn}

\author{Zhan Li}
\address[Zhan Li]{Department of Mathematics, Southern University of Science and Technology, Shenzhen 518055, China}
\email{lizhan@sustech.edu.cn, lizhan.math@gmail.com}

\maketitle

\setcounter{tocdepth}{2}
\tableofcontents

\section{Introduction}\label{sec:introduction}

Throughout the paper, we work over the field of complex numbers.

Let $(X, \De)$ be a projective klt pair, where $K_X + \De$ is $\Qq$-Cartier. The finite-generation theorem
\cite[Corollary~1.1.2]{BCHM10} shows that the log canonical ring
\[
R(X, K_X + \De) = \bigoplus_{m\in \Nn} H^0\left(X, \Oo_X(\lf m(K_X+\De)\rf)\right)
\]
is finitely generated. If the Kodaira dimension of $K_X+\De$ is non-negative, then the natural map 
\[
X \dto \proj (R(X, K_X+\De))
\] is called the log canonical model of $(X, \De)$. Moreover, $Y \coloneqq \proj (R(X, K_X+\De))$ carries a divisor $\De_Y$ such that $(Y, \De_Y)$ is of log general type with klt singularities. 


Assuming standard conjectures in the minimal model program,
Shokurov's geography theory decomposes a fixed polytope of boundary
divisors on $X$ into finitely many polyhedral chambers on each of
which the log canonical model is constant; see \cite{SC11}. The Morrison-Kawamata cone conjecture suggests a global version of this phenomenon for Calabi-Yau varieties (i.e., $K_X$-trivial by our convention); see \cite{Mor93,Mor96,Kaw97,Tot10}. According to this conjecture, for a fixed Calabi-Yau variety $X$,
only finitely many isomorphism classes occur among the targets of the
log canonical models of $(X,\De)$ as $\De$ varies over all boundaries
for which $(X,\De)$ is klt, although the corresponding rational maps
may still be infinite in number.

On the other hand, the Severi-Maehara type theorem asserts that for a fixed variety $X$, the set
\[
\{Y \mid X \dto Y \text{~surjective rational map,~} Y \text{~general type}\}
\] is a finite set up to birational equivalence; see \cite{Mae83,Tsa97,BD97,HM06,BJR26}, etc. Note that $X\dto Y$ need not have connected generic fiber. Moreover, not only are the targets $Y$ finite up to birational equivalence, but so are the rational maps $X\dto Y$; see \cite{KO75}.

We investigate to what extent the Morrison-Kawamata cone conjecture extends to arbitrary varieties and the Severi-Maehara theorem extends to targets not necessarily of general type. More precisely, we seek to determine for which varieties the set of log canonical models is finite as the boundary varies.

For this purpose, we introduce the following notation. Let $X$ be a normal projective variety. Define
\[
\CC(X)\coloneqq
\{[Y]\mid X\dto Y \text{~is the log canonical model for some klt pair~}
(X,\De)\},
\]
where $[Y]$ denotes the isomorphism class of $Y$. Thus we record only
the isomorphism class of the target, not the corresponding rational map. We also classify these models according to their dimensions by defining
\[
\CC^d(X)\coloneqq \{[Y] \in \CC(X) \mid \dim Y=d\}.
\] Then one can ask the same question for each $\CC^d(X)$.

The natural first testing ground is the case of surfaces. Recall that a smooth projective surface $X$ is minimal if it contains no smooth rational curve with self-intersection $-1$. Our main result in this direction is the following.

\begin{theorem}\label{thm:minimal surfaces}
  Let $X$ be a smooth projective minimal surface. Then $\CC(X)$ is finite.
\end{theorem}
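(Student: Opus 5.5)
We split $\CC(X)=\CC^0(X)\sqcup\CC^1(X)\sqcup\CC^2(X)$ and treat the three dimensions separately. Since $\CC^0(X)$ consists only of a point, there is nothing to prove there. Throughout, for a klt pair $(X,\De)$ with $\kappa(K_X+\De)\ge 0$, we use the Zariski decomposition $K_X+\De=P+N$. On a surface the log canonical model is $\proj R(X,P)$, and $P$ is semiample because klt surface abundance holds. So the target is obtained by contracting exactly the curves $C$ with $P\cdot C=0$ (when $\dim Y=2$), or it is the base of the fibration defined by $P$ (when $\dim Y=1$). We then run through the Enriques--Kodaira classification of minimal $X$.

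\emph{Two-dimensional models.} Here $P$ is big, and $Y$ is the normal surface obtained from $X$ by contracting a configuration $E=\bigcup E_i$ of curves with negative definite intersection matrix, namely the curves in the null locus of $P$. Since $Y$ is determined up to isomorphism by $X$ and $E$, it suffices to show that only finitely many such configurations $E$ occur up to automorphisms of $X$. The contracted curves satisfy $(K_X+\De)\cdot E_i\le 0$, and those not contained in $\Supp \De$ must satisfy $K_X\cdot E_i\le 0$ strictly controlled. Combined with adjunction and the klt condition (coefficients $<1$), this should force each $E_i$ to be a $(-2)$-curve, a curve with $K_X\cdot E_i<0$, or a component of $\De$ with bounded self-intersection. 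The cases are as follows.
\begin{itemize}
\item If $\kappa(X)=2$, there are finitely many curves with $K_X\cdot C\le 0$, namely $(-2)$-curves, while curves in $\Supp\De$ with $K_X\cdot C>0$ need $C^2<0$ with $-C^2$ bounded in terms of $K_X\cdot C$. I expect a bounded-negativity type argument here, using $K_X$ nef and big plus the Hodge index theorem, to give finitely many classes.
\item If $\kappa(X)=1$ or $0$, the relevant negative curves are $(-2)$-curves in fibers, or $(-2)$-curves on K3/Enriques surfaces. Here we invoke the cone theorem for K3 and Enriques surfaces (Sterk, Kawamata, Totaro): up to $\Aut(X)$ there are finitely many negative definite $(-2)$-configurations. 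For elliptic surfaces, the fiber components give finitely many choices.
\item If $\kappa(X)=-\infty$, then $X$ is $\Pp^2$, a Hirzebruch surface $\mathbb F_n$, or a ruled surface over a curve of genus $\ge1$. The only negative curve is the negative section, so there are finitely many $E$.
\end{itemize}

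\emph{One-dimensional models.} Here $P$ is nef with $P^2=0$ and $\kappa=1$, so $|mP|$ defines a fibration $X\to Y$ onto a curve. If $g(Y)\ge 2$, the Severi--de Franchis/Kobayashi--Ochiai finiteness cited in the introduction gives finitely many $Y$. If $g(Y)=1$, then $Y$ is a quotient of $\Alb(X)$, and only finitely many such quotients arise from fibrations of $X$: the fibration classes correspond to isotropic classes in $\NS(X)$, and up to automorphisms these are finite on minimal surfaces. If $g(Y)=0$, then $Y\cong\Pp^1$, which is a single isomorphism class.

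The main obstacle is the two-dimensional case for $\kappa(X)\in\{0,1\}$ with infinite automorphism group and infinitely many $(-2)$-curves. There, finiteness of the targets up to isomorphism is needed rather than finiteness of the configurations, so I would rely on the known cone conjecture results for K3, Enriques and abelian surfaces, and on Mordell--Weil translations for elliptic surfaces, to reduce to finitely many orbits. The $\kappa(X)=2$ bound on the components of $\De$ that get contracted also needs care.
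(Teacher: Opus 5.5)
There are genuine gaps, and they sit exactly where the paper does its real work. Your treatment of $\kappa(X)\le 0$ and of curve targets of genus $\neq 1$ is fine, but the two-dimensional case is missing its key reduction. A two-dimensional log canonical model $f\colon X\to Z$ has klt, hence rational, singularities, so every component of $\Exc(f)$ is a smooth rational curve; the paper uses this in Proposition~\ref{prop:finite-negative-curves}.

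Your substitute for this reduction is incorrect. A curve $E_i$ in the null locus of $P$ but not in $\operatorname{Supp}N$ satisfies $(K_X+\De)\cdot E_i=N\cdot E_i\geq 0$, and this is positive whenever $E_i$ meets $N$. For example, take an $A_2$ chain $C_1+C_2$ on a K3 surface and let $A$ be the pullback of an ample divisor from its contraction. Then $K_X+\De\sim_{\Qq}A+xC_1$ contracts $C_2$, yet $(K_X+\De)\cdot C_2=x>0$. The klt condition also bounds nothing: by Proposition~\ref{prop:realize-negative-curve}, every smooth rational curve of any negative self-intersection is the exceptional locus of some log canonical model.

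Even granting the reduction, your $\kappa(X)=2$ step fails as proposed. The Hodge index theorem, with $K_X$ nef and big, gives finitely many classes only once $K_X\cdot C$ is fixed. Bounding $K_X\cdot C=-2-C^2$ for smooth rational curves on a surface of general type is a deep Miyaoka--Yau-type statement. The paper invokes the Lu--Miyaoka finiteness theorem (Theorem~\ref{thm:Lu-Miyaoka}) here, and no bounded-negativity argument of the kind you sketch is known to replace it.

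The more serious gap is $\kappa(X)=1$, where your claim that only fiber components matter is false. Since $K_X\equiv aF$ with $a>0$, a horizontal smooth rational curve has $C^2=-2-a(C\cdot F)<-2$. Such curves contract to klt points and can be infinite in number: the minimal properly elliptic surface $S_3$ of Section~\ref{sec:surface-counterexamples} carries infinitely many $(-3)$-sections $nP$. The vertical $(-2)$-curves are finite, so the obstacle is these horizontal curves, not $(-2)$-configurations.

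To control them, the paper first disposes of $q(X)>0$ via the Albanese map. For $q(X)=0$ over $\Pp^1$ it then needs four ingredients:
\begin{enumerate}
\item a uniform bound $C\cdot F\le M_X$ from Miyaoka's inequality \cite[Theorem~1.3(i)]{Miy08};
\item a finitely generated group $G\subset J(K)$ of fiberwise translations with $\lambda_d(t_Q(C))=\lambda_d(C)+dQ$, so that finiteness of $G/dG$ gives finitely many orbits of curves;
\item the fact that a negative curve is determined by its generic restriction together with its intersections with components of reducible fibers;
\item a Hodge-index argument on $\operatorname{span}\{F,A\}^{\perp}$ showing that, once one horizontal component $A$ is normalized, the other components of a negative definite configuration lie in a finite set.
\end{enumerate}
Merely invoking Mordell--Weil translations supplies none of these.

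Finally, for elliptic-curve targets, your assertion that fibrations are finite up to $\Aut(X)$ on minimal surfaces is unjustified. The paper instead factors $X\to Y$ through $\Alb(X)$ and shows that the kernel of $\Alb(X)\to Y$ is connected, because $X\to Y$ has connected fibers. It then applies the Lenstra--Oort--Zarhin finiteness of abelian subvarieties up to $\Aut(\Alb(X))$.
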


The minimality assumption can be dropped for the following classes of 
surfaces. 

\begin{theorem}\label{thm:intro-positive}
Let $X$ be a smooth projective surface. Then $\CC(X)$ is finite in each of the following cases:
\begin{enumerate}
  \item $X$ is Calabi-Yau;
\item $X$ is of general type;
\item $X$ is irregular.
\end{enumerate}
Moreover, $\CC^1(X)$ is finite for every smooth projective surface $X$.
\end{theorem}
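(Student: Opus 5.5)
We split $\CC(X)=\CC^0(X)\sqcup\CC^1(X)\sqcup\CC^2(X)$ and treat the three pieces separately. $\CC^0(X)$ is the single point $[\mathrm{pt}]$. For $\CC^1(X)$, a log canonical model onto a curve $Y$ factors through the Iitaka fibration of $K_X+\De$. So $X\dto Y$ is a morphism with connected fibres: it is the contraction of a face of $\overline{\mathrm{NE}}(X)$, followed possibly by the normalization of a curve, which on a surface is automatically a morphism $f\colon X\to Y$ with connected fibres. We then bound $Y$ case by case.
\begin{itemize}
\item If $g(Y)\ge 2$, the map $X\to Y$ is a fibration onto a curve of general type. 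The Severi--Maehara type finiteness \cite{Mae83,KO75} gives finitely many such $Y$.
\item If $g(Y)=1$, then $Y$ is a quotient of the Albanese image. Since $f$ factors through $\Alb(X)$, $Y$ is an elliptic curve which is a quotient of $\Alb(X)$ with connected fibres. Up to isomorphism these are finite in number, because $\Alb(X)$ has finitely many isomorphism classes of elliptic quotients up to isogeny-degree constraints coming from connectedness. I expect to need care here: the right invariant is that $Y$ is an elliptic curve occurring as a factor of $\Alb(X)$ up to isogeny, and I would use that there are only finitely many abelian subvarieties up to the action of $\mathrm{Aut}$, or argue directly via Poincaré reducibility.
\item If $g(Y)=0$, then $Y\cong\Pp^1$.
\end{itemize}
This gives finiteness of $\CC^1(X)$ for every smooth projective surface, which is the last claim.

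For the three listed cases we bound $\CC^2(X)$. If $\dim Y=2$, the map $X\dto Y$ is birational. On a surface it is the composition of a morphism $X\to Y$ contracting curves (the $K_X+\De$-negative and $K_X+\De$-trivial curves) with no extraction. So $Y$ is obtained from $X$ by contracting a configuration of curves with negative definite intersection matrix, and $[Y]$ is determined by the contracted configuration $E=\bigcup E_i$. We must show there are finitely many such $E$ up to $\mathrm{Aut}(X)$, or finitely many resulting $Y$.
\begin{itemize}
\item \textbf{General type.} Every contracted curve $C$ satisfies $(K_X+\De)\cdot C\le 0$. Adjunction together with the klt condition on $\De$ forces $C$ to be a $(-1)$-curve or to satisfy $K_X\cdot C\le 0$. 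The curves with $K_X\cdot C\le 0$ on a surface of general type are the $(-2)$-curves and $(-1)$-curves, and these are finitely many because their classes lie in the finite set of negative curves orthogonal or nearly orthogonal to the big class $K_X$ (Bogomolov/Miyaoka type bounds, or simply: they contract to the canonical model plus finitely many exceptional curves). Finitely many curves give finitely many subsets $E$.
\item \textbf{Calabi--Yau.} Here $K_X\equiv 0$, so $X$ is K3, Enriques, abelian or bielliptic. The contracted curves are configurations of $(-2)$-curves, which can be infinite in number. For K3 and Enriques surfaces we invoke the cone conjecture, which is a theorem for surfaces (Sterk, Namikawa, Kawamata, Totaro \cite{Tot10}): $\mathrm{Aut}(X)$ acts on the nef effective cone with a rational polyhedral fundamental domain. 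Consequently the faces, and hence the contracted configurations, are finite modulo $\mathrm{Aut}(X)$, and automorphic configurations give isomorphic $Y$. Abelian and bielliptic surfaces have no negative curves, so $Y\cong X$.
\item \textbf{Irregular.} If $\kappa\ge0$ or the surface is ruled over a curve of genus at least $1$, use the Albanese map $a\colon X\to \Alb(X)$. Every contracted curve is rational, hence contained in a fibre of $a$, or in a fibre of the ruling when $a$ has $1$-dimensional image. Negative definite configurations supported in fibres of a fixed fibration are finitely many, since only finitely many fibres are reducible and each fibre contains finitely many curves. Hence there are finitely many $E$.
\end{itemize}

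The main obstacle is the Calabi--Yau case. The reduction from isomorphism classes of $Y$ to $\mathrm{Aut}(X)$-orbits of negative definite $(-2)$-configurations requires knowing that every such configuration spans a face of the nef cone that meets the interior of the effective cone. Only then does the polyhedral fundamental domain control it. I would handle this through Totaro's formulation of the cone conjecture for klt Calabi--Yau pairs of dimension $2$.
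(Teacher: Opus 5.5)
Your argument for surfaces of general type has a genuine gap. You claim that adjunction and the klt condition force a contracted curve $C$ to be a $(-1)$-curve or to satisfy $K_X\cdot C\le 0$. This is false once $C$ is allowed to lie in $\operatorname{Supp}\De$. If $C$ appears in $\De$ with coefficient $b<1$, then $(K_X+\De)\cdot C\le 0$ only gives $K_X\cdot C\le -bC^2$, which is positive when $C^2<0$.

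Concretely, let $C$ be any smooth rational curve with $C^2=-q\le -2$. Then $(K_X+(1-\tfrac{2}{q})C)\cdot C=0$. Adding a general small multiple of a suitable ample class gives a klt pair whose log canonical model contracts exactly $C$; this is Proposition~\ref{prop:realize-negative-curve}. So any $(-q)$-curve on $X$ with $q\ge 3$, which has $K_X\cdot C=q-2>0$, is the exceptional locus of some log canonical model. Your earlier assertion, that every contracted curve is $(K_X+\De)$-nonpositive, also fails for curves contracted at later steps of the MMP, but that is secondary.

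Hence you must control all smooth rational curves of negative self-intersection on $X$, with no a priori bound on $K_X\cdot C$. Their finiteness does not follow from being orthogonal or nearly orthogonal to $K_X$. The missing ingredient is the Lu--Miyaoka theorem \cite{LM95}: a smooth projective surface of general type contains only finitely many smooth rational curves. You also need the fact that every exceptional component of a birational contraction onto a klt surface is a smooth rational curve. This holds because klt surface singularities are rational, so $R^1f_*\Oo_X=0$, and then \cite[Lemma~3.8]{Bad01} applies. Together these give finiteness of $\CC^2(X)$ via Proposition~\ref{prop:finite-negative-curves}, which is how the paper argues.

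The remaining parts are essentially sound. For Calabi--Yau surfaces your route differs from the paper's. You use the nef-cone form of the cone conjecture, note that the null locus of a nef class is constant on the relative interior of each face of the rational polyhedral domain $\Pi\subset\operatorname{Nef}^e(X)$, and conclude by rigidity. The paper instead uses the effective-cone form (via \cite{GLSW24}) together with Shokurov's geography of models. Your ``main obstacle'' is not one: the pullback of an ample class from a birational target is nef and big, hence lies in $\operatorname{Nef}^e(X)$. Your argument is more direct for surfaces, while the paper's extends verbatim to MKD spaces.

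For elliptic targets, only your second option works. Connectedness of the fibres forces $\operatorname{Ker}(\Alb(X)\to Y)$ to be connected, so $Y\simeq\Alb(X)/K$ for an abelian subvariety $K$, and \cite{LOZ96} gives finitely many such quotients. Poincar\'e reducibility alone yields finiteness only up to isogeny, which is not enough. The irregular case matches the paper, apart from the same need to justify that exceptional curves are smooth rational.
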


Theorem~\ref{thm:intro-positive} is a combination of Theorem~\ref{thm:finite lc models for CY}, Theorem~\ref{thm:general-type-surface}, Theorem~\ref{thm:curve-targets} and Theorem~\ref{thm:irregular-surface} proved in Section~\ref{sec:positive}. Outside the cases covered above, however, the log canonical models can be infinite for non-minimal surfaces.

\begin{theorem}\label{thm:surface-counterexamples}
For every
\[
 \kappa\in\{-\infty,0,1\},
\]
there exists a smooth projective surface $X_\kappa$ with $\kappa(X_\kappa)=\kappa$ such that $\CC(X_\kappa)$ is infinite.
\end{theorem}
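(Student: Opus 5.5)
For each $\kappa\in\{-\infty,0,1\}$ we need a non-minimal surface carrying infinitely many pairwise non-isomorphic log canonical models. By Theorem~\ref{thm:intro-positive}, these models cannot be curves, and $X_\kappa$ must be regular and neither Calabi--Yau nor of general type. So the infinitude has to come from two-dimensional models. These are obtained by contracting curves on which $K_X+\De$ is trivial.

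The plan is to start from a minimal surface $S$ with an infinite automorphism group, or with infinitely many $(-2)$- or $(-1)$-curves, and blow up suitably chosen points. The resulting $X$ will contain infinitely many configurations of negative curves, and different configurations give non-isomorphic contractions. Concretely:
\begin{itemize}
\item For $\kappa=-\infty$, take $X$ to be the blow-up of $\Pp^2$ at nine general points, or at nine base points of a general cubic pencil, followed by one or two further blow-ups. It has infinitely many $(-1)$-curves, permuted by the infinite Mordell--Weil group of the rational elliptic fibration.
\item For $\kappa=0$, blow up an Enriques or K3 surface with infinitely many smooth rational curves at a point.
\item For $\kappa=1$, blow up an elliptic surface with infinite Mordell--Weil group, that is, with infinitely many sections of negative self-intersection.
\end{itemize}

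The key steps, in order, are as follows.

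\begin{enumerate}
\item Produce an infinite family of curves $C_n\subset X$ (for instance $(-1)$- or $(-2)$-curves meeting the exceptional curve $E$ in a controlled way) that remain pairwise inequivalent under $\Aut(X)$. The extra blow-ups serve exactly to kill most automorphisms, so that the configurations $C_n\cup E$ differ.
\item For each $n$, construct a klt boundary $\De_n$ with small coefficients such that $K_X+\De_n$ is big and nef. We want $(K_X+\De_n)^{\perp}$ to consist exactly of a chosen finite configuration $\Sigma_n$ containing $C_n$. The natural choice is $K_X+\De_n\equiv H_n$ with $H_n$ ample on $X$ minus $\Sigma_n$. When $K_X$ is not pseudo-effective we use an ample $A$ and write $\De_n=\epsilon A'+\sum a_i D_i$, arranged so that $K_X+\De_n=\phi_n^*(\text{ample})$.
\item Show that the contractions $Y_n$ of $\Sigma_n$ are pairwise non-isomorphic, or at least not finitely many up to isomorphism. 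The intended invariant is the configuration of singularities of $Y_n$ together with the intersection numbers of the images of other curves. Alternatively, count the curves of bounded $H_n$-degree passing through the singular points, which should grow with $n$.
\end{enumerate}

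The main obstacle is step (3), namely distinguishing the targets rather than the maps. An automorphism of $Y_n$ lifts to the minimal resolution, which is essentially $X$. So it suffices to show that no automorphism of $X$ (or of its minimal resolution data) maps $\Sigma_n$ to $\Sigma_m$. We therefore plan to choose the blown-up points very general, so that $\Aut(X)$ is finite; this is known for blow-ups of rational elliptic surfaces and of K3/Enriques surfaces at very general points. An isomorphism $Y_n\cong Y_m$ then forces $\Sigma_n$ and $\Sigma_m$ to lie in the same orbit of a finite group, which is impossible for $n\ne m$ large.

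The secondary difficulty is step (2), positivity of $K_X+\De_n$ when $\kappa=-\infty$. Here we rely on the fact that $-K_X$ is nearly nef on these rational surfaces. The divisors $C_n$ and $E$ can be given coefficients slightly less than $1$, compensating $K_X\cdot C_n<0$, and an ample piece makes the rest positive.
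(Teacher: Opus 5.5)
\textbf{Verdict.} For $\kappa=0,1$ your plan is essentially the paper's. You blow up a very general point of a surface $S$ with $K_S$ nef, $\Aut^0(S)$ trivial, and infinitely many smooth rational curves $C_n$ with $C_n^2\le -2$. Then $E$ is the unique $(-1)$-curve, $\Aut(X)=\{\Id_X\}$ (Proposition~\ref{prop:very-general-blowup}), $X\to Y_n$ is the minimal resolution, and Lemma~\ref{lem:lifting-minimal-resolutions} separates the targets. The paper uses the degree-$2$ and degree-$3$ base changes $S_2,S_3$ of a rational elliptic surface, whose Mordell--Weil sections have square $-2$ and $-3$. You would still have to exhibit your K3/Enriques and properly elliptic examples, but that is routine.

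\textbf{The gap is the case $\kappa=-\infty$.} There the infinite family you use consists of $(-1)$-curves. With very general blown-up points, none of them meets $E$, because a very general point lies on none of the countably many $C_n$. So the ``controlled'' configurations $C_n\cup E$ you have in mind do not arise. Whenever $\Sigma_n$ contains a $(-1)$-curve, $X\to Y_n$ is not the minimal resolution of $Y_n$. In the basic case $\Sigma_n=C_n$, the target $Y_n$ is smooth and $X=\operatorname{Bl}_{y_n}Y_n$. An isomorphism $Y_n\simeq Y_m$ lifts to $X$ only if it sends $y_n$ to $y_m$. Nothing forces this, since a smooth point is not intrinsically marked. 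Hence even $\Aut(X)=\{\Id_X\}$ would not distinguish the targets. Finiteness of $\Aut(X)$ is also not automatic here: $E$ is no longer the unique $(-1)$-curve, so the descent argument of Proposition~\ref{prop:very-general-blowup} breaks down.

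\textbf{What is missing.} You need an invariant of the smooth surfaces $Y_n$ themselves. The paper gets one by choosing the blown-up point specially rather than very generally. It takes $p$ on a smooth fiber $F_b$ for which $P(b)$ is non-torsion, and off all the $C_n$. Let $h\colon S\to Z$ contract $O$ onto a degree-one del Pezzo surface. The translation $t_{-nP}$ then identifies $Y_n$ with $\operatorname{Bl}_{q_n}Z$. The points $q_n=h(t_{-nP}(p))$ are pairwise distinct points of the smooth elliptic curve $D=h(F_b)\in|-K_Z|$. One then shows $|-K_{Y_n}|=\{D_n\}$, with $D_n\simeq D$ and $N_{D_n/Y_n}\simeq\Oo_D(z_0-q_n)\in\Pic^0(D)$. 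Consequently $Y_n\simeq Y_m$ forces $q_m\in\Aut(D,z_0)\cdot q_n$, which is a finite set.

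\textbf{A smaller, fixable error in step (2).} For a $(-1)$-curve, $(K_X+bC_n)\cdot C_n=-1-b<0$, so putting coefficients on $C_n$ cannot compensate for $K_X\cdot C_n<0$. Instead, take $\De_n$ to be $\frac1m$ times the pullback of a general member of $|m(A-K_{Y_n})|$. Then $K_X+\De_n\sim_{\mathbb{Q}}f_n^*A+C_n$, so $C_n$ lies in the fixed part, as in Proposition~\ref{prop:realize-negative-curve}.
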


In hindsight, this is reasonable: blowing up general points can reduce the symmetry of $X$, making $\Aut(X)$ smaller, while increasing the complexity of its effective cone. It is therefore reasonable to impose a minimality condition on $X$. Although minimality is sufficient for the finiteness of log canonical models for surfaces, its direct higher-dimensional analogue is false.

\begin{theorem}\label{thm:intro-threefold}
There exists a smooth projective threefold $V$ with $K_V$ ample such that $\CC^3(V)$ is infinite.
\end{theorem}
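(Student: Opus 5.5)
The plan is to build $V$ as a double cover of a product $X\times C$, where $X$ is a surface with infinitely many pairwise non-isomorphic blow-downs, so that $V$ inherits infinitely many birational contractions of its own. Take for $X$ a surface of the kind used for Theorem~\ref{thm:surface-counterexamples}, say $\kappa(X)=-\infty$ (for instance $\Pp^2$ blown up at nine or more very general points). Such an $X$ carries infinitely many $(-1)$-curves $E_1,E_2,\dots$. Moreover the contractions $X\to X_i$ of $E_i$ should give infinitely many pairwise non-isomorphic surfaces $X_i$, because $\Aut(X)$ acts on $\{E_i\}$ with infinitely many orbits. I take this as the input from the surface construction.

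\emph{Construction.} Let $C$ be a smooth curve of genus $\ge 2$ and $M=X\times C$. Choose a very ample divisor $B\subset M$ that is divisible by $2$ in $\Pic(M)$ and sufficiently positive, and let $\pi\colon V\to M$ be the double cover branched along a smooth member of $|B|$. Then $V$ is smooth and $K_V=\pi^*(K_M+\tfrac12 B)$. Taking $B$ large makes $K_M+\tfrac12B$ ample, so $K_V$ is ample.

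\emph{Producing lc models.} For each $i$, let $g_i\colon M\to M_i\coloneqq X_i\times C$ be the blow-down of $E_i\times C$, and let $V\to V_i\to M_i$ be the Stein factorization of $g_i\circ\pi$. Then $V_i$ is the double cover of $M_i$ branched along $g_i(B)$, and it is normal of dimension $3$. Fix an ample divisor $A_i$ on $M_i$ and large $N$. I want an effective $\Qq$-divisor $\De_i$ on $V$ with small coefficients such that
\[
K_V+\De_i\sim_{\Qq}\pi^*g_i^*(NA_i).
\]
Equivalently, I need $\De_i\sim_{\Qq}\pi^*H_i$ with $H_i\coloneqq Ng_i^*A_i-K_M-\tfrac12B$.

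For $N\gg0$ I would write $H_i$ as
\[
H_i=\bigl(Ng_i^*A_i-K_M-\tfrac12B-\ep(E_i\times C)\bigr)+\ep(E_i\times C),
\]
and check that the first bracket is ample. This uses that $-(E_i\times C)$ is $g_i$-ample and that $Ng_i^*A_i$ dominates the bounded class $K_M+\tfrac12B$. Then $\De_i=\pi^*$ of $\tfrac1m$ times a general member of a very ample multiple of the bracket, plus $\ep\,\pi^*(E_i\times C)$. This makes $(V,\De_i)$ klt. The log canonical ring of $K_V+\De_i$ is then the section ring of a semiample divisor pulled back from the ample divisor $A_i$ on $M_i$. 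Hence the lc model of $(V,\De_i)$ is $V\to V_i$, so $[V_i]\in\CC^3(V)$.

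\emph{Distinguishing the $V_i$ (main obstacle).} It remains to show that infinitely many of the $V_i$ are pairwise non-isomorphic. My first attempt is to recover $X_i$ intrinsically from $V_i$. The composite $V_i\to M_i\to C$ is the relative structure over the base $C$, and I expect it to be canonical: it should be the Albanese-type map, at least when $C$ has genus $\ge2$ and $X$ is regular. If so, an isomorphism $V_i\cong V_j$ descends to a fibrewise isomorphism over an automorphism of $C$. A general fibre of $V_i\to C$ is a double cover of $X_i$, so this produces an isomorphism between double covers of $X_i$ and $X_j$ with branch curves in a fixed linear system.

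To finish, I would show one of two things:
\begin{itemize}
\item the covering involution is canonical on these fibres, for example because it is the unique non-trivial automorphism when $B$ is chosen very general;
\item alternatively, count moduli and argue that only finitely many $j$ can give an isomorphism with a fixed $V_i$.
\end{itemize}
Either way this reduces the claim to the non-isomorphism of the $X_i$. This descent of the isomorphism is the step I expect to need the most care. If it fails, the fallback is an invariant-counting argument: compare the finitely many possibilities per isomorphism class against the infinitude of the orbits of $\{E_i\}$.
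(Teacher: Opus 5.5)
There is a genuine gap, and it comes before the distinguishing step you flag: the realization step fails. Your claim that $Ng_i^*A_i-K_M-\tfrac12B-\ep(E_i\times C)$ is ample for small $\ep>0$ is false. Let $\gamma=E_i\times\{c\}$, which is contracted by $g_i$. Then $g_i^*A_i\cdot\gamma=0$, so the multiple $N$ contributes nothing on $\gamma$, and the bracket has degree $1-\tfrac12B\cdot\gamma+\ep$ there. The divisor $B$ is divisible by $2$ in $\Pic(M)$, and ampleness of $K_M+\tfrac12B$ forces $\tfrac12B\cdot\gamma>-K_M\cdot\gamma=1$, so $B\cdot\gamma\geq4$. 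Hence you would need $\ep>\tfrac12B\cdot\gamma-1\geq1$. The coefficient of $D_i\coloneqq\pi^{-1}(E_i\times C)$ in $\De_i$ is then at least $1$, and $(V,\De_i)$ is not klt.

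No other boundary repairs this. For general $B$, $D_i$ is covered by the irreducible curves $\ell=\pi^{-1}(\gamma)$. These are double covers of $\Pp^1$ branched in $B\cdot\gamma\geq4$ points, so they have genus $\tfrac12B\cdot\gamma-1\geq1$. Moreover $K_V\cdot\ell=B\cdot\gamma-2$ and $D_i\cdot\ell=-2$. Suppose $f\colon V\to V_i$ were the log canonical model of a klt pair $(V,\De)$ with $\De=bD_i+\De'$, $b<1$ and $D_i\not\subset\operatorname{Supp}\De'$. As for any birational log canonical model of a klt pair, $K_V+\De\sim_{\Rr}f^*H+aD_i$ with $H$ ample and $a\geq0$. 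Intersecting with a general $\ell$ gives $(K_V+bD_i)\cdot\ell\leq-2a\leq0$, that is, $b\geq\tfrac12B\cdot\gamma-1\geq1$, a contradiction. Conceptually, $(K_V+D_i)\cdot\ell=B\cdot\gamma-4\geq0$, whereas a divisor contracted by such a model must be covered by curves with $(K_V+D)\cdot\ell<0$, hence by rational curves. So none of your $V_i$ is shown to lie in $\CC^3(V)$.

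The root cause is that a general ample branch divisor ramifies over every $E_i\times\{c\}$. This turns exactly the rational curves you want to contract into curves of positive genus. The branch divisor cannot simply avoid them either: an unramified lift $\tilde\gamma$ would have $K_V\cdot\tilde\gamma=K_M\cdot\gamma=-1$, contradicting ampleness of $K_V$.

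The paper circumvents both problems. On $W=\Pp_S(\Oo_S\oplus\Oo_S(H))$, the class $\xi$ is trivial on the section $T\simeq S$. So the branch divisor $\mathcal B\in|6\xi|$ can miss $T$ while $K_V=\rho^*(\xi+p^*(K_S+H))$ is still ample. Then $T$ splits in $V$, and the $(-1)$-sections lift isomorphically to rational curves $\Gamma_i\subset T_+$. The contractions used are small, and the negativity comes from the boundary $a_iT_+$, since $T_+\cdot\Gamma_i=-d_i$ outweighs $K_V\cdot\Gamma_i=d_i-1$. The targets are distinguished by the embedding dimension $d_i+3$ of their unique singular point, so no descent argument is needed.

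Separately, your distinguishing step is only a sketch, and its input is not justified as stated. Infinitely many $\Aut(X)$-orbits of $(-1)$-curves do not by themselves give pairwise non-isomorphic blow-downs. An isomorphism $X_i\simeq X_j$ of smooth surfaces need not match the blown-up points, which is why the paper needs its anticanonical-curve argument in the case $\kappa=-\infty$.
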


The infinitely many log canonical models in Theorem~\ref{thm:intro-threefold}
are obtained by small contractions. It is interesting to compare this with
\cite[Theorem~4]{Tsa97}, which shows that for a smooth projective threefold
$V$ of general type,
\[
\{Y \mid V \to Y \text{ birational morphism with } Y \text{ smooth}\}
\]
is a finite set up to isomorphism. In fact, the proof of
\cite[Theorem~4]{Tsa97} extends to the case where the targets have
$\mathbb Q$-factorial terminal singularities. This suggests that, in order
to generalize Theorem~\ref{thm:minimal surfaces} to higher-dimensional
varieties, one may need to impose mildness conditions on the singularities
of the targets. Therefore, we consider the following variation.

For a normal projective variety $X$ of dimension $n$, set
\[
   \CC_{\mathrm{sm}}(X)
   \coloneqq
   \left\{
   [Y] \in \CC(X) \mid Y \text{ is smooth}
   \right\}.
\]

\begin{question}\label{que:smooth-minimal-model-finiteness}
Let $X$ be a smooth projective variety with $K_X$ nef. Is
$\CC_{\mathrm{sm}}(X)$ finite?
\end{question}

In view of Theorem~\ref{thm:surface-counterexamples}, the nefness of $K_X$
cannot be omitted in general. On the other hand, the smoothness assumption
on the target cannot be relaxed merely to $\mathbb Q$-factorial terminal
singularities; see Remark~\ref{rmk:Q-fact and terminal not enough}.

We conclude the introduction with an overview of the paper. Section~\ref{sec:preliminaries} fixes the terminology and notation. Section~\ref{sec:positive} proves the finiteness of log canonical models for Calabi-Yau surfaces, surfaces of general type and irregular surfaces. Moreover, we show the finiteness of curve targets. Theorem~\ref{thm:intro-positive} is a combination of these results. Moreover, Theorem~\ref{thm:minimal surfaces} is proved. Section~\ref{sec:surface-counterexamples} constructs non-minimal surfaces with infinitely many log canonical models. Theorem~\ref{thm:surface-counterexamples} is proved in this section. Section~\ref{sec:threefold} constructs a canonically polarized threefold with infinitely many log canonical models. Theorem~\ref{thm:intro-threefold} is proved in this section.

\subsection*{Acknowledgements}
The authors used ChatGPT to assist in the proof of Theorem~\ref{thm:minimal surfaces} and in the search for counterexamples. This work was partially supported by the Guangdong Basic and Applied Basic Research Foundation (No.~2024A1515012341) and the NSFC (No.~12471041).

\section{Preliminaries}\label{sec:preliminaries}

We collect the terminology used throughout the paper. A rational curve is an integral curve whose normalization is $\Pp^1$. For a normal projective variety $X$, its irregularity is
\[
q(X)\coloneqq h^1(X,\Oo_X),
\]
and its Kodaira dimension is $\kappa(X)\coloneqq\kappa(K_X)$. Let $X\to T$ be a projective morphism, where $X$ and $T$ are normal
quasi-projective varieties. We use ``$/T$'' to denote the relative
setting over $T$. For two $\Rr$-divisors $D$ and $B$ on $X/T$, we write
\[
D\sim_{\Rr}B/T
\qquad
(\text{resp. }D\equiv B/T)
\]
if they are $\Rr$-linearly equivalent over $T$ (resp. numerically equivalent over $T$). We write $\Eff(X/T) \subset N^1(X/T)$ for the cone generated by numerical classes of effective divisors over $T$ and $\NE(X/T)\subset N_1(X/T)$ for the closed cone generated by the numerical classes of effective curves contracted over $T$.

Let $(X,\De)$ be a pair, where $X$ is normal, $\De\geq0$ is an $\Rr$-divisor, and $K_X+\De$ is $\Rr$-Cartier. If $f\colon Y \to X$ is a proper birational morphism from a normal variety \(Y\) such that $D$ is a prime divisor on $Y$, then the log discrepancy of $D$ with respect to $(X, \De)$ is defined to be
\[a(D; X,\De) \coloneqq {\rm mult}_D(K_Y-f^*(K_X + \De)) + 1.\]
This definition is independent of the choice of $Y$. The pair $(X,\De)$ is klt if the log discrepancy of any divisor over $X$ is $> 0$.


Let
\(\phi\colon X \dashrightarrow Y/T\) be a birational contraction
(i.e., \(\phi\) does not extract divisors) of normal quasi-projective
varieties over \(T\), where \(Y\) is projective over \(T\). We write
\(\Delta_Y := \phi_*\Delta\) for the strict transform of \(\Delta\).
By \cite[Definition~3.6.7]{BCHM10}, \((Y/T,\Delta_Y)\) is a weak log canonical model of
\((X/T,\Delta)\) if \(K_Y+\Delta_Y\) is nef/\(T\) and
\[
a(D;Y,\Delta_Y) \geq a(D;X,\Delta)
\]
for any divisor \(D\) over \(X\). A weak log canonical model is called a good minimal model if \(K_Y+\Delta_Y\) is semiample/\(T\). Let $D$ be an $\Rr$-Cartier divisor on $X/T$. By \cite[Definition~3.6.5]{BCHM10}, a rational map
\[
g\colon X\dashrightarrow Z
\]
over $T$ is an ample model of $D$ over $T$ if $Z$ is normal and projective over $T$ and there exists an ample $\Rr$-Cartier divisor $H$ on $Z/T$ with the following property. If
\[
p\colon W\to X
\qquad\text{and}\qquad
q\colon W\to Z
\]
resolve $g$, then $q$ is a contraction morphism and
\[
p^*D\sim_{\Rr}q^*H+E/T
\]
for some effective divisor $E\geq0$ such that
\[
B\geq E
\]
for every $B\in|p^*D/T|_{\Rr}$, where $|p^*D/T|_{\mathbb{R}}$ denotes the $\Rr$-linear system of $p^*D$ over $T$. The ample model of $K_X+\De$ over $T$ is called the log canonical model of $(X/T,\De)$. For consistency of terminology, by our convention, a log canonical model need not be birational to the original variety (cf. \cite{BCHM10}). Moreover, if $\phi: X \dto Y/T$ is a weak log canonical model of a klt pair $(X/T,\De)$ with $X$ a $\Qq$-factorial variety, then $\phi$ can be realized as a log canonical model of a possibly a new klt pair $(X/T, \De')$. Indeed, let $A$ be an ample effective divisor on $Y/T$ and let $A_X$ be the strict transform of $A$ on $X$. Suppose that $p\colon W \to X$ and $q\colon W\to Y$ are projective birational morphisms such that $\phi=q\circ p^{-1}$. Let $\Exc(\phi)$ be the reduced sum of the exceptional prime divisors on $X$ contracted by $\phi$. By the negativity lemma (\cite[Lemma 3.39]{KM98}), there exists $m \gg 1$ such that
\[
p^*(A_X+m\Exc(\phi))=q^*A+E,
\] where $E \geq 0$ is a $q$-exceptional divisor. As $\phi$ is a weak log canonical model of $(X/T, \De)$, we have
\[
p^*(K_X+\De)=q^*(K_Y+\De_Y)+E',
\] where $E'\geq 0$ is $q$-exceptional. Therefore, for any $\ep>0$, we have
\[
p^*(K_X+\De+\ep(A_X+m\Exc(\phi)))=q^*(K_Y+\De_Y+\ep A)+(\ep E+E').
\] As $K_Y+\De_Y+\ep A$ is ample over $T$, as long as $(X/T, \De+\ep(A_X+m\Exc(\phi)))$ is klt, $\phi$ is the log canonical model of $(X/T, \De+\ep(A_X+m\Exc(\phi)))$.

For a projective variety $X$, if $\phi\colon X \dto Y$ is a good minimal model of $(X, \De)$ such that $h\colon Y \to Z$ is the contraction induced by $K_Y+\De_Y$, then $h\circ \phi \colon X \dto Z$ is the log canonical model of $(X,\De)$. Moreover, if $K_X+\De$ is $\Qq$-Cartier and
$\kappa(K_X+\De)\geq0$, then the log canonical model of $(X,\De)$ is
exactly the natural map
\[
X\dashrightarrow\proj R(X,K_X+\De).
\]

For pairs with singularities worse than klt, the existence of log
canonical models is not known in full generality. Moreover, the
Morrison-Kawamata cone conjecture can fail beyond the klt setting;
see \cite{Tot10}. Therefore, we restrict our discussion to klt pairs.
For simplicity, most of our results concern smooth underlying varieties.

Finally, for a vector bundle $E$ on a scheme $B$, we write
\[
\Tot_B(E)\coloneqq\spec_B\Sym(E^\vee)
\]
for its total space. We use Grothendieck's quotient convention
\[
\Pp_B(E)\coloneqq\proj_B\Sym(E)
\]
for the associated projective bundle. 

\section{Surfaces with finitely many log canonical models}\label{sec:positive}

\subsection{Calabi-Yau surfaces}

A projective surface $X$ with klt singularities such that
$K_X\sim_{\Qq}0$ is called a Calabi-Yau surface. The
Morrison-Kawamata cone conjecture is known for Calabi-Yau surfaces;
see \cite{Kaw97,Tot10}. The following finiteness result is the starting
point of this project.

\begin{theorem}\label{thm:finite lc models for CY}
  Let $X$ be a Calabi-Yau surface. Then $\CC(X)$ is a finite set.
\end{theorem}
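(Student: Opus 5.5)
The plan is to split $\CC(X)$ according to the dimension of the target, $d\in\{0,1,2\}$, and treat each piece separately. Throughout we use that $K_X\sim_{\Qq}0$, so for a klt pair $(X,\De)$ the divisor $K_X+\De\sim_{\Qq}\De$. The log canonical model is then the ample model of the effective divisor $\De$, and it depends only on the numerical class of $\De$ in the movable or effective cone, up to the usual geography.

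The case $d=0$ contributes only a point. For $d=2$, the model $X\dto Y$ is birational and, since $X$ is a surface, it is a morphism $X\to Y$ contracting finitely many curves. Here $Y$ is determined by the face of the nef cone on which the pullback of an ample class of $Y$ lies. Concretely, $Y$ is the ample model of a big class $D$ with $D$ nef on the minimal resolution, and the contracted curves are exactly the $D$-trivial curves.

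For $d=1$, the target is a curve $C$ and $X\to C$ is a fibration with fibers numerically proportional to a nef class $D$ with $D^2=0$. Because $K_X\sim_\Qq 0$, a general fiber $F$ satisfies $K_F\sim 0$, so $F$ is an elliptic curve. The base $C$ is then $\Pp^1$ or an elliptic curve, since $g(C)\le q(X)\le 2$. If $X$ is an abelian surface or its quotient, the base is an elliptic curve isogenous to a quotient of $X$. By Poincaré reducibility there could be infinitely many such elliptic subgroups, but their quotients have bounded degree. I would therefore handle this case by noting that the possible targets $C$ are determined up to isomorphism by finitely many choices modulo $\Aut(X)$, which is exactly what the cone conjecture provides.

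The key input is the Morrison--Kawamata cone conjecture for klt Calabi--Yau surfaces \cite{Kaw97,Tot10}. It says there is a rational polyhedral cone $\Pi\subset \overline{\mathrm{Nef}}^{e}(X)$ that is a fundamental domain for the action of $\Aut(X)$ on the effective nef cone. Each log canonical model is the ample model of some effective nef class: on a surface the $\De$-negative part of the Zariski decomposition is contracted, and the model is the ample model of the positive part $P$, where $P$ is nef and effective. Translating $P$ by an automorphism $g$, we may assume $g^*P\in\Pi$, and the target of $g^*P$ is isomorphic to that of $P$. Since $\Pi$ is rational polyhedral, it has finitely many faces. Nef classes in the relative interior of a single face have the same null curves and the same Iitaka fibration structure, hence isomorphic ample models. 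This gives finiteness simultaneously for all $d$, so the dimension split above mostly serves as a sanity check.

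The main obstacle is justifying that classes in the relative interior of a face of $\Pi$ give isomorphic targets. For $d=2$, two big nef classes $P,P'$ in the same open face have the same set of $P$-trivial curves, so the contractions coincide. For $d=1$ and $d=0$ we need that nef classes on the boundary are semiample and that the map is determined by the face. On a klt Calabi--Yau surface, abundance for $(X,\ep P)$ gives semiampleness, since $K_X+\ep P\equiv\ep P$ and log abundance holds for surfaces. Two semiample classes whose null loci coincide define the same contraction. Since the open face is convex, $P$ and $P'$ span a segment in it, and the Stein factorizations agree.

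A remaining subtlety is that $\Pi$ lies in the effective nef cone, whose boundary may contain irrational rays. Points there are not realized by $\Qq$-divisors, but they are realized by $\Rr$-divisors $\De$ with klt pair $(X,\De)$. To handle this, I would invoke the polyhedral structure of $\Pi$ and note that every face of $\Pi$ is itself rational.
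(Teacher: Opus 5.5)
Your argument is correct, but it takes a different route from the paper.

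\textbf{The paper's route.} The paper uses the cone conjecture in its effective-cone form: Totaro's theorem together with \cite[Theorem~1.4]{GLSW24} gives a rational polyhedral $\Pi\subset\Eff(X)$ whose $\Aut(X)$-translates cover $\Eff(X)$. It then applies Shokurov's geography of minimal models to cut $\Pi$ into finitely many relatively open chambers, on each of which the weak log canonical model $X\dto Y_j$ is constant. A convexity argument on $Y_j$ then pins down the final contraction $Y_j\to Z_j$.

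\textbf{Your route.} You use the nef-cone form directly, and you replace both \cite{GLSW24} and the geography by the Zariski decomposition $\De=P+N$.
\begin{itemize}
\item Since $K_X\sim_{\Qq}0$, the log canonical model of $(X,\De)$ is the ample model of the effective nef $\Rr$-divisor $P$.
\item $P$ is semiample by abundance for $(X,\ep P)$, so the model is the contraction determined by the null face of $P$.
\item This null face is constant on the relative interior of each face $\sigma$ of $\Pi$. Indeed, if $x,y\in\operatorname{relint}\sigma$ and $c\in\NE(X)$ with $y\cdot c=0<x\cdot c$, then $y+\ep(y-x)\in\sigma\subset\mathrm{Nef}(X)$ for small $\ep>0$, yet it is negative on $c$. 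This step uses only convexity, which matters because faces of $\Pi$ need not be faces of the nef cone. You only sketched it; it deserves this one line.
\end{itemize}
In effect, your $N$ plays the role of the paper's $\phi_j$, and your face argument is the paper's last step.

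\textbf{Comparison.} Your route is more elementary and gives the explicit bound $\#\CC(X)\le$ the number of faces of $\Pi$. However, Zariski decomposition is surface-specific. The paper's route, through $\Eff(X)$ and geography, is the one that extends to higher-dimensional Calabi--Yau varieties with good minimal models, or to MKD spaces.

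\textbf{Minor points.}
\begin{itemize}
\item The irrational-ray worry in your last paragraph is moot. You only apply the argument to the actual divisors $g^*P$, which are effective, and abundance holds for $\Rr$-boundaries on surfaces.
\item Your preliminary split by $d$ can be dropped, since the face argument treats all $d$ at once. It also contains slips: $g(C)\le q(X)\le 2$ does not give $g(C)\le 1$ (the canonical bundle formula does), and ``quotients of bounded degree'' is not a finiteness argument.
\end{itemize}
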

\begin{proof}
  The Morrison-Kawamata cone conjecture for Calabi-Yau surfaces was
established in \cite[Theorem~4.1]{Tot10}. Together with
\cite[Theorem~1.4]{GLSW24}, this implies that there exists a rational
polyhedral cone $\Pi\subset\Eff(X)$ such that
  \begin{equation}\label{eq:fill Eff}
     \bigcup_{g\in \Aut(X)} g\cdot \Pi = \Eff(X),
  \end{equation} where $\Aut(X)$ is the automorphism group of $X$ which acts naturally on numerical classes of divisors. By the geography of minimal models (see \cite[Theorem~3.4]{SC11} or \cite[Theorem~2.7]{LZ25}),
$\Pi$ admits a finite decomposition into relatively open rational
polyhedral cones:
\[
\Pi=\bigcup_{1\leq j\leq m}P_j.
\]
This decomposition has the following property. Suppose that $(X,B)$
and $(X,D)$ are klt pairs with $[B],[D]\in P_j$. If
$\phi\colon X\dto Y$ is a weak log canonical model of $(X,B)$, then it
is also a weak log canonical model of $(X,D)$. Fix a map $\phi_j\colon X\dto Y_j$ for each $j$, and let $P_{Y_j}$
denote the image of $P_j$ under the pushforward map induced by
$\phi_j$. Since weak log canonical models of surfaces are good minimal
models,  and since $P_{Y_j}$ is relatively open, there is
a unique contraction
\[
Y_j\to Z_j
\]
induced by any semiample divisor $D$ whose class lies in $P_{Y_j}$.

 Now let $(X,\De)$ be a klt pair. By \eqref{eq:fill Eff}, there exists
$g\in\Aut(X)$ such that $g\cdot[\De]\in\Pi$. Choose $j$ such that
$g\cdot[\De]\in P_j$. By the preceding discussion,
  \[X \xrightarrow{g} X \overset{\phi_j}{\dashrightarrow}Y_j \to Z_{j}\] is the log canonical model of $(X, \De)$. Therefore, 
  \[\CC(X)=\{[Z_{j}]\mid 1 \leq j\leq m\},\] which is finite.
\end{proof}

\begin{remark}
The same argument works for any Calabi-Yau variety $X$ satisfying
the Morrison-Kawamata cone conjecture, provided that every klt pair
$(X,\De)$ admits a good minimal model. More generally, \cite{CLLZ25}
axiomatizes the Morrison-Kawamata cone conjecture and introduces
Morrison-Kawamata dream spaces (MKD spaces). The same argument applies
to MKD spaces, which form a broader class than varieties of
Calabi-Yau type. It remains an open question to classify MKD surfaces.
\end{remark}

\subsection{Surfaces of general type}

We show that smooth surfaces of general type admit only finitely many
log canonical models. For this purpose, we first establish several
auxiliary results.

\begin{proposition}\label{prop:finite-negative-curves}
Let $X$ be a smooth projective surface. Suppose that $X$ contains only finitely many smooth rational curves with negative self-intersection. Then $\CC^2(X)$ is a finite set. 
\end{proposition}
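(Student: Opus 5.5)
The plan is to show that every $2$-dimensional log canonical model of $X$ is obtained by contracting some subset of the finitely many smooth rational curves with negative self-intersection. Let $C_1,\dots,C_N$ be these curves. Since there are at most $2^N$ subsets, this will give $|\CC^2(X)|\le 2^N$.

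\textbf{Step 1: the model is a birational morphism.} Let $(X,\De)$ be klt and let $X\dto Y$ be its log canonical model with $\dim Y=2$. Then $K_X+\De$ is big. Running the $(K_X+\De)$-MMP on the smooth surface $X$ only contracts curves, so it produces a birational morphism $X\to X'$, namely a composite of divisorial contractions. By abundance for klt surfaces, $K_{X'}+\De_{X'}$ is nef, big and semiample. The induced contraction $X'\to Y$ is therefore birational, and the log canonical model is the composite birational morphism $f\colon X\to Y$. Moreover, $(Y,\De_Y)$ is klt, so $Y$ is a klt surface.

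\textbf{Step 2: the exceptional curves lie among the $C_i$.} Since $Y$ is a klt surface, its singularities are rational (e.g.\ \cite{KM98}). The morphism $f$ is a resolution of $Y$, possibly non-minimal, at each singular point, and it is a composition of blow-ups at smooth points of $Y$. For any resolution of a rational surface singularity, every exceptional curve is a smooth rational curve. The same holds over smooth points of $Y$, where $f$ factors into point blow-ups. By Grauert/Mumford, the intersection matrix of the $f$-exceptional curves is negative definite, so each such curve $E$ has $E^2<0$. Hence $\Exc(f)\subseteq C_1\cup\dots\cup C_N$.

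\textbf{Step 3: $Y$ is determined by its exceptional set.} Let $f_1\colon X\to Y_1$ and $f_2\colon X\to Y_2$ be birational morphisms to normal surfaces with the same exceptional curves. Then $f_2\circ f_1^{-1}$ is a birational map which contracts no curves in either direction. By Zariski's main theorem applied to the normal surfaces $Y_1$ and $Y_2$, it is an isomorphism. Alternatively, take an ample $A$ on $Y_2$. Then $f_2^*A$ is nef on $X$ and trivial exactly on $\Exc(f_1)$, so $f_2^*A=f_1^*A'$ for some ample $A'$ and $Y_2\cong\proj R(X,f_2^*A)\cong Y_1$. Thus $[Y]$ depends only on the subset $\{i\mid C_i\subseteq \Exc(f)\}$, which proves finiteness.

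The main point requiring care is Step 2. It needs the statement that every exceptional curve of an arbitrary (not necessarily minimal) resolution of a klt surface singularity is a smooth rational curve. I would deduce this from rationality of klt surface singularities: for a rational singularity, $R^1f_*\Oo_X=0$ forces every exceptional curve to have arithmetic genus $0$, so each is a smooth rational curve. The remaining steps are formal.
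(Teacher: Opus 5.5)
Your proposal is correct and follows essentially the same route as the paper. The log canonical model is a birational morphism; klt implies rational singularities, so $R^1f_*\Oo_X=0$ forces every exceptional component to be a smooth rational curve (the paper quotes \cite[Lemma~3.8]{Bad01} where you sketch the arithmetic-genus argument); negative definiteness places these among the finitely many negative curves; and the rigidity lemma, which is your Zariski main theorem argument, shows the target is determined by the contracted curves.
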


\begin{proof}
Let $f\colon X\dashrightarrow Z$ be the log canonical model of a klt
pair $(X,\Delta)$ such that $K_X+\Delta$ is big. Since the log minimal
model program for surfaces has no flips, $f$ factors as
\[
X\xrightarrow{g}Y\xrightarrow{h}Z,
\]
where $g$ is a good minimal model of $(X,\De)$ and $h$ is the morphism induced by
the semiample divisor $K_Y+g_*\Delta$. In particular, $f=h\circ g$ is
a morphism. The induced pair $(Z,f_*\Delta)$ is klt. Hence $Z$ has
rational singularities by \cite[Theorem~5.22]{KM98}, and consequently
$R^1f_*\Oo_X=0$. It follows from \cite[Lemma~3.8, p.~30]{Bad01} that
every irreducible component of $\Exc(f)$ is a smooth rational curve.

The intersection matrix of the exceptional curves is negative
definite. In particular, every irreducible component of $\Exc(f)$ has
negative self-intersection and therefore belongs to a fixed finite
collection of curves by assumption. By the rigidity lemma
\cite[Lemma~1.15]{Deb01}, a contraction is determined, up to
isomorphism of its target, by the curves that it contracts. Therefore,
only finitely many targets can occur.
\end{proof}

\begin{remark}
The proof above establishes the finiteness of the contractions, not
merely the finiteness of their targets.
\end{remark}

The geometric Lang conjecture predicts that, on a smooth projective variety of general type, the union of positive-dimensional subvarieties which are not of general type is a proper closed subset. For a surface, this would imply finiteness of all rational and elliptic curves, including singular ones. The following theorem is sufficient for our purposes.

\begin{theorem}[{\cite[Corollary~1]{LM95}}]\label{thm:Lu-Miyaoka}
A smooth projective surface of general type contains only finitely many smooth rational curves and smooth elliptic curves.
\end{theorem}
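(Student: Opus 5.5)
The statement is Lu–Miyaoka's finiteness theorem, cited as \cite[Corollary~1]{LM95}. We do not reprove it in full. We only sketch how the argument goes, since the paper uses it as a black box. The approach is via the logarithmic Bogomolov--Miyaoka--Yau inequality.

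First we reduce to a bound on the number of such curves. Let $X$ be a smooth projective surface of general type. We may assume $X$ is minimal. Indeed, the contraction $X\to X_{\min}$ has finitely many exceptional curves. Every other smooth rational or elliptic curve on $X$ maps to a rational or elliptic curve on $X_{\min}$, possibly singular, with a controlled number of singular points. So it suffices to bound, on the minimal model, rational and elliptic curves $C$ whose singularities are bounded in terms of $X$.

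The key step is a logarithmic Miyaoka--Yau type inequality for an open surface. We apply it to $X\setminus C$, or to a pair $(X,aC)$ for suitable $0<a\le 1$. It gives an inequality of the form
\[
(K_X+aC)^2\le 3\,e_{\mathrm{orb}}(X,aC),
\]
with the orbifold Euler number $e_{\mathrm{orb}}(X,aC)$ computed from $e(X)$, from $e(C)=2-2g(C)$, and from the singularities of $C$. When $C$ is rational or elliptic we have $e(C)\geq 0$. The right-hand side is then bounded above by $3c_2(X)$ plus a term linear in $a$. The left-hand side contains $2aK_X\cdot C+a^2C^2$.

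We combine this with adjunction, $C^2+K_X\cdot C=2g(C)-2\le 0$. This gives an upper bound on $K_X\cdot C$, and hence on the degree of $C$ with respect to $K_X$, in terms of $c_1^2$ and $c_2$. Here we use that $K_X$ is nef and big on the minimal model, and that we optimize over $a$.

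With the degree bounded, the curves lie in finitely many components of the Hilbert scheme. A positive-dimensional family of rational or elliptic curves would cover $X$, or at least a dense open subset. This is impossible for a surface of general type, since it is not uniruled and not covered by elliptic curves. Hence only finitely many such curves exist.

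The main obstacle is the orbifold BMY inequality itself. It must hold for pairs $(X,aC)$ that are only log canonical near the singular points of $C$. It also needs a careful local computation of the Euler-number correction at those points, which is the technical heart of \cite{LM95}.
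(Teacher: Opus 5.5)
The paper gives no argument for this statement. It is imported verbatim from \cite[Corollary~1]{LM95}, so treating it as a black box is exactly what the paper does. For minimal $X$ your outline (orbifold Bogomolov--Miyaoka--Yau for $(X,aC)$ with $K_X+aC$ nef, adjunction, optimizing in $a$) is sound.

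As an argument, however, your sketch has a genuine gap in the reduction to the minimal model. The non-minimal case is exactly what the paper needs, because the proof of Theorem~\ref{thm:general-type-surface} applies the statement to an arbitrary surface of general type. Let $\pi\colon X\to X_{\min}$ be the contraction, and let $C_0=\pi(C)$ for a smooth curve $C$ that is not $\pi$-exceptional. It is true that $C_0$ is singular only at the finitely many points of $X_{\min}$ that are blown up. But the multiplicities $m_i$ of $C_0$ at the (possibly infinitely near) centers are \emph{not} bounded a priori: for every $m$, a curve with an ordinary $m$-fold point at a blown-up point has smooth strict transform. Bounding the $m_i$ is in fact part of the problem, since $K_X\cdot C=K_{X_{\min}}\cdot C_0+\sum_i m_i$.

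For such $C_0$, adjunction reads $C_0^2+K_{X_{\min}}\cdot C_0=2g-2+2\delta$ with $\delta$ unbounded. So the inequality $C^2+K\cdot C\le 0$, on which your key step rests, is unavailable. Moreover, $(X_{\min},aC_0)$ is log canonical at an $m$-fold point only if $a\le 2/m$, so optimizing over $a$ gives no uniform bound. Your claim that the singularities are ``bounded in terms of $X$'' is therefore false, and the reduction fails.

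The fix is to stay on $X$. Apart from the finitely many $\pi$-exceptional curves, a smooth rational curve $C\subset X$ has $s\coloneqq K_X\cdot C\geq 0$, because $K_X-\pi^*K_{X_{\min}}$ is effective and $\pi$-exceptional; hence $C^2=-2-s$. Take $a=s/(s+2)$. Then $K_X+aC$ is the pullback of $K_Y$, where $X\to Y$ contracts $C$ to a cyclic quotient point, and $(K_X+aC)^2=K_X^2+s^2/(s+2)$. The positive part $P$ of the Zariski decomposition of $K_Y$ satisfies $P^2\geq K_Y^2$. The orbifold inequality on a minimal model of $Y$, together with $e(Y)=c_2(X)-1$, gives $P^2\leq 3(c_2(X)-1)$. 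This bounds $s$, and with it every $m_i$. For a smooth elliptic $C$ take $a=1$: the logarithmic inequality for the log smooth pair $(X,C)$ gives $K_X^2+s\leq 3c_2(X)$.

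A smaller point concerns your Hilbert-scheme step. A bound on $K\cdot C$, with $K$ merely nef and big (and on $X$ not even nef), does not by itself put the curves into finitely many components of the Hilbert scheme. The curves in question have negative square and bounded intersection with $\pi^*K_{X_{\min}}$ and with the exceptional curves. So the Hodge index theorem gives finitely many numerical classes, each containing at most one such curve, as in Step~4 of the proof of Theorem~\ref{thm:minimal surfaces}.
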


\begin{remark}
Theorem~\ref{thm:Lu-Miyaoka} asserts only the finiteness of smooth
rational and elliptic curves. The finiteness of all rational and
elliptic curves on an arbitrary surface of general type remains open.
\end{remark}

The preceding two results imply the finiteness of log canonical models
for smooth surfaces of general type.

\begin{theorem}\label{thm:general-type-surface}
Let $X$ be a smooth projective surface of general type. Then $\CC(X)$
is finite.
\end{theorem}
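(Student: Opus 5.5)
We split $\CC(X)$ by the dimension of the target: $\CC(X)=\CC^0(X)\cup\CC^1(X)\cup\CC^2(X)$. Trivially $\CC^0(X)$ consists of the single point. For $\CC^2(X)$, Theorem~\ref{thm:Lu-Miyaoka} shows that $X$ has only finitely many smooth rational curves, hence in particular only finitely many with negative self-intersection. Proposition~\ref{prop:finite-negative-curves} then gives finiteness of $\CC^2(X)$. The remaining work is the case of curve targets.

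For $\CC^1(X)$, let $f\colon X\dashrightarrow C$ be the log canonical model of a klt pair $(X,\De)$ with $\kappa(K_X+\De)=1$. As in the proof of Proposition~\ref{prop:finite-negative-curves}, $f$ factors through a good minimal model $g\colon X\to Y$, followed by the morphism $h\colon Y\to C$ induced by the semiample divisor $K_Y+g_*\De$. Hence $f$ is a morphism with connected fibres onto a smooth curve $C$. Let $F$ be a general fibre of $f$. Adjunction gives
\[
(K_X+\De)\cdot F = 2g(F)-2+\De\cdot F,
\]
and this vanishes because $K_X+\De$ is numerically a pullback from $C$ up to the $g$-exceptional part, which does not meet $F$. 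Since $\De\ge 0$, we get $g(F)\le 1$. Thus $X$ is covered by a one-dimensional family of rational or elliptic curves. If $F$ were rational, $X$ would be ruled, contradicting that $X$ is of general type. If $F$ were elliptic, then $\kappa(X)\le 1$ by easy addition and $C_{n,1}$ (or directly by the canonical bundle formula for elliptic fibrations), again a contradiction. Therefore $\CC^1(X)=\emptyset$.

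The main point requiring care is the claim $(K_X+\De)\cdot F=0$, which follows since the general fibre $F$ avoids the $g$-exceptional curves and $K_Y+g_*\De\sim_{\Rr}h^*(\text{ample})$. Granting this, the argument is complete. Alternatively, one can simply invoke the later general result (Theorem~\ref{thm:curve-targets}) that $\CC^1(X)$ is finite for all surfaces, but the direct argument above avoids this and shows the set is even empty. Combining the three pieces, $\CC(X)$ is finite.
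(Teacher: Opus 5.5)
Your proposal is correct, and for $\CC^2(X)$ it is the paper's argument: Theorem~\ref{thm:Lu-Miyaoka} together with Proposition~\ref{prop:finite-negative-curves}.

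You differ from the paper in how you handle lower-dimensional targets. The paper disposes of them in one line. Since $K_X$ is big and $\De\geq 0$, one has $\kappa(K_X+\De)\geq\kappa(K_X)=2$. So every log canonical model of a klt pair on $X$ is birational, and $\CC(X)=\CC^2(X)$.

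Your fibration argument is valid and reaches $\CC^1(X)=\emptyset$. It shows $(K_X+\De)\cdot F=0$ for a general fibre, deduces $g(F)\leq 1$, and then rules out ruled and elliptic fibrations. It has a side benefit: on any smooth surface, a curve-target log canonical model is a fibration by rational or elliptic curves. For this theorem, though, it is a detour. Even inside your setup, bigness of $K_X$ gives $K_X\cdot F>0$ for the moving curve $F$, which immediately contradicts $K_X\cdot F=-\De\cdot F\leq 0$.

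The same bigness also shows that $\CC^0(X)$ is empty, not a single point as you state. This is harmless for finiteness.
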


\begin{proof}
By Theorem~\ref{thm:Lu-Miyaoka}, $X$ contains only finitely many smooth
rational curves. Since $X$ is of general type, we have
$\CC(X)=\CC^2(X)$. The claim follows from
Proposition~\ref{prop:finite-negative-curves}.
\end{proof}

\subsection{Log canonical models which are curves}

We need the following finiteness theorem for abelian subvarieties.

\begin{theorem}[{\cite[Theorem]{LOZ96}}]\label{thm:loz}
Let $K$ be a field and let $A$ be an abelian variety over $K$. The natural action of $\Aut_K(A)$ on the set of abelian subvarieties of $A$ defined over $K$ has only finitely many orbits. Consequently, up to $K$-isomorphism, only finitely many abelian varieties can be embedded into $A$ as abelian subvarieties.
\end{theorem}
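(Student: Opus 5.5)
This is a cited result (Lenstra–Oort–Zarhin), and I would reduce it to the Poincaré complete reducibility theorem combined with finiteness results for orders and lattices. The consequence in the last sentence is immediate: if $B\hookrightarrow A$ and $B'\hookrightarrow A$ have images in the same $\Aut_K(A)$-orbit, then $B\cong B'$ over $K$. So the whole task is the finiteness of orbits.

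\textbf{Step 1: reduce to complements and norm endomorphisms.} Fix a polarization $\lambda$ of $A$. For an abelian subvariety $B\subset A$, Poincaré reducibility gives a complement $B^\perp$ such that the addition map $B\times B^\perp\to A$ is an isogeny. Equivalently, $B$ is the image of a symmetric idempotent $e_B\in \End^0(A)=\End_K(A)\otimes\Qq$, taken with respect to the Rosati involution. Clearing denominators gives the norm endomorphism $\epsilon_B=n_Be_B\in\End_K(A)$.

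\textbf{Step 2: bound the degrees.} I would bound the degree of $B\times B^\perp\to A$, or equivalently $n_B$, by a quantity depending only on $(A,\lambda)$. A clean route is Zarhin's trick: pass to $A^4\times(A^\vee)^4$, which carries a principal polarization. Alternatively, one can argue that the restricted polarization $\lambda|_B$ has degree bounded in terms of $\deg\lambda$. In fact this step may not be needed at all, since Step 3 only requires finitely many conjugacy classes of idempotents.

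\textbf{Step 3: count idempotents up to conjugation.} Subvarieties $B$ correspond to idempotents $e\in\End^0(A)$ up to the relation "same image". The group $\Aut_K(A)=\End_K(A)^\times$ acts by conjugation, and $\End_K(A)$ is an order in the semisimple $\Qq$-algebra $\End^0(A)$. I would then prove the following: in an order $\mathcal O$ of a semisimple algebra $D$, the idempotents of $D$ fall into finitely many $\mathcal O^\times$-conjugacy classes, up to replacing an idempotent by another with the same right ideal $eD$, that is, the same image.

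Idempotents with image $B$ correspond to right ideals $eD$, i.e. to submodules of the free module. Over $D$ itself there are only finitely many isomorphism types of such submodules, indexed by the ranks in each simple factor. The finer problem is counting their $\mathcal O$-lattice structures. To do this I would use finiteness of class numbers of $\mathcal O$-lattices in a fixed $D$-module (the Jordan–Zassenhaus theorem), together with the finiteness of double cosets $\mathcal O^\times\backslash \mathrm{GL}(D)/\mathrm{Stab}$, which follows from arithmetic-group finiteness (Borel–Harish-Chandra). The concrete plan is to identify $B\subset A$ with the saturated $\mathcal O$-submodule $\operatorname{Hom}(A,B)$-image, namely the left ideal $I_B=\{f\in\End(A): f(A)\subset B\}$. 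Distinct $B$ give distinct $I_B$. Isomorphism classes of such left ideals together with their complements are finite by Jordan–Zassenhaus. If $I_B\oplus I_{B^\perp}$ and $I_{B'}\oplus I_{B'^\perp}$ have finite index in $\mathcal O$ with matching isomorphism types, then a unit of $\mathcal O$ should carry one to the other, once we also control the finitely many finite-index overlattices.

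\textbf{Expected obstacle.} The hardest step is the final one: upgrading "abstractly isomorphic ideal decompositions" to "conjugate by a unit of $\End_K(A)$". This requires handling the finite index of $I_B\oplus I_{B^\perp}$ inside $\mathcal O$ with a uniform bound, which is exactly where Step 2's degree bound enters. I would first try to dispose of it by bounding that index through the polarization degree and then invoking Jordan–Zassenhaus on the finitely many resulting lattices.
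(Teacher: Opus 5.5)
\emph{There is a genuine gap.} The paper only cites Lenstra--Oort--Zarhin, so your sketch has to stand on its own. It breaks at exactly the step you flag, and the repair you propose cannot work, because the uniform bound of Step~2 is false.

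Take $A=E\times E$ with $E$ an elliptic curve without complex multiplication, so that $\mathcal O=\operatorname{End}(A)=M_2(\mathbb Z)$. Give $A$ the product principal polarization, whose Rosati involution is transposition. For coprime integers $a,b$ let $B$ be the image of $x\mapsto(ax,bx)$. Then:
\begin{itemize}
\item $B^\perp$ is the image of $x\mapsto(-bx,ax)$;
\item the addition map $B\times B^\perp\to A$ is given by $\left(\begin{smallmatrix}a&-b\\ b&a\end{smallmatrix}\right)$ and has degree $(a^2+b^2)^2$;
\item $n_B=a^2+b^2$;
\item $I_B\oplus I_{B^\perp}$ consists of the integral matrices whose columns lie in $\mathbb Z(a,b)^{T}\oplus\mathbb Z(-b,a)^{T}$, so
\[
[\mathcal O:I_B\oplus I_{B^\perp}]=(a^2+b^2)^2 .
\]
\end{itemize}
None of this is bounded, and Zarhin's trick is irrelevant because $\lambda$ is already principal.

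Worse, this index is invariant under the simultaneous action of $\operatorname{Aut}(A)=\mathrm{GL}_2(\mathbb Z)$ on pairs. Hence the pairs $(B,B^\perp)$ fall into infinitely many orbits, even though all these $B$ form a single orbit and all the $I_B$ (resp.\ $I_{B^\perp}$) are isomorphic right $\mathcal O$-modules. So the assertion your last step aims at is false: there need not be a unit carrying $I_B\oplus I_{B^\perp}$ to $I_{B'}\oplus I_{B'^\perp}$ when the isomorphism types match. The overlattices you would need to control are infinitely many. Any argument that carries the Rosati complement along must fail; the complement has to be forgotten.

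What you actually need is that $\mathcal O^\times$, acting by left multiplication, has finitely many orbits on the saturated right ideals $I\subset\mathcal O$, i.e.\ those with $\mathcal O/I$ torsion-free. Your $I_B=\{f: f(A)\subset B\}$ fits this framework:
\begin{itemize}
\item it is a right ideal for $fg=f\circ g$, not a left one;
\item it is saturated, because $nf(A)\subset B$ forces the connected group $f(A)$ into the identity component $B$ of $[n]^{-1}B$;
\item $B\mapsto I_B$ is injective, because the norm endomorphism maps $A$ onto $B$;
\item $I_{u(B)}=uI_B$.
\end{itemize}

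Now use the quotient instead of a complement. In $0\to I_B\to\mathcal O\to\mathcal O/I_B\to 0$ both outer terms are lattices whose rationalizations lie among finitely many $D$-modules. Jordan--Zassenhaus therefore leaves finitely many isomorphism types, with representatives $I_0,Q_0$. The group $\operatorname{Ext}^1_{\mathcal O}(Q_0,I_0)$ is finitely generated, and $\operatorname{Ext}^1_{\mathcal O}(Q_0,I_0)\otimes\mathbb Q=\operatorname{Ext}^1_D(Q_0\otimes\mathbb Q,I_0\otimes\mathbb Q)=0$ since $D$ is semisimple, so it is finite. Suppose two such sequences, after identifying their outer terms with $I_0$ and $Q_0$, define the same class. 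Then they are isomorphic, and on the middle term the isomorphism is an automorphism of $\mathcal O_{\mathcal O}$, i.e.\ left multiplication by some $u\in\mathcal O^\times$. Hence $uI_B=I_{B'}$ and $B'=u(B)$.

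Two alternatives also work. Your aside on double cosets becomes a proof once made precise: $D^\times$ is transitive on right ideals $eD$ of a fixed type, the stabilizer is a parabolic $\mathbb Q$-subgroup $P$ of $\mathrm{GL}_1(D)$, and $\mathcal O^\times\backslash D^\times/P$ is finite by Borel's finiteness theorem for $\Gamma\backslash G(\mathbb Q)/P(\mathbb Q)$. Or: a complement of bounded index does exist, obtained from a nonzero integer killing $\operatorname{Ext}^1_{\mathcal O}(M,-)$ for all lattices $M$ (a cleared-denominator separability idempotent). That complement would rescue your overlattice count, but it is not $B^\perp$.
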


We show that the 1-dimensional log canonical models are always finite.

\begin{theorem}\label{thm:curve-targets}
Let $X$ be a smooth projective surface. Then $\CC^1(X)$ is finite.
\end{theorem}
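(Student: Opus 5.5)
The plan is to show that every curve target is the base of a fibration of $X$, and then to split according to the genus of the base curve.

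\textbf{Step 1: reduce to fibrations.} Let $f\colon X\dashrightarrow C$ be the log canonical model of a klt pair $(X,\Delta)$ with $\dim C=1$, so $\kappa(K_X+\Delta)=1$. As in the proof of Proposition~\ref{prop:finite-negative-curves}, the log MMP for surfaces has no flips. Hence $f$ factors as $X\xrightarrow{g}Y\xrightarrow{h}C$, where $g$ is a good minimal model (a composition of divisorial contractions) and $h$ is the contraction induced by the semiample divisor $K_Y+g_*\Delta$. In particular $f$ is a morphism. By the definition of an ample model, $h$, and hence $f$, has connected fibres, and $C$ is a smooth projective curve. Thus every element of $\CC^1(X)$ is the isomorphism class of the base of a fibration $X\to C$. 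It suffices to show that the bases of such fibrations form finitely many isomorphism classes, and we split according to $g(C)$.

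\textbf{Step 2: genus $0$ and genus $\geq 2$.} If $g(C)=0$, then $C\cong\Pp^1$, which contributes a single class. If $g(C)\geq 2$, then $C$ is of general type and $f$ is a surjective morphism onto it. By the Severi--Maehara/de Franchis type finiteness recalled in the introduction (e.g.\ \cite{KO75} or \cite{Mae83}), there are only finitely many such targets up to birational equivalence. For smooth projective curves, birational equivalence is the same as isomorphism, so this case is finite too.

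\textbf{Step 3: genus $1$ (the main obstacle).} If $g(C)=1$, then $C$ is an elliptic curve; fix a base point so that it is an abelian variety. By the universal property of the Albanese map $X\to\Alb(X)$, the morphism $f$ factors as $X\to\Alb(X)\to C$ up to translation. The map $\Alb(X)\to C$ is surjective because $f$ is. Dualizing, $C\cong C^\vee$ is the image of a morphism $C^\vee\to\Alb(X)^\vee=\Pic^0(X)$ given by pullback $f^*$. This morphism has finite kernel; in fact $f^*$ is injective on $\Pic^0$ because $f$ has connected fibres. So $C$ is isomorphic to an abelian subvariety of the fixed abelian variety $\Pic^0(X)$, or at least isogenous to one with controlled kernel. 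Theorem~\ref{thm:loz} gives that only finitely many abelian varieties, up to isomorphism, embed into $\Pic^0(X)$. Hence only finitely many elliptic curves occur.

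The delicate point is the injectivity of $f^*\colon\Pic^0(C)\to\Pic^0(X)$, which is what makes the image an honest copy of $C$ rather than just an isogenous curve. I would prove it via $f_*\Oo_X=\Oo_C$ and the projection formula: if $f^*L\cong\Oo_X$, then $L\cong f_*f^*L\cong\Oo_C$. If that failed, I would instead apply Theorem~\ref{thm:loz} to $\Alb(X)$ directly, using that the kernel of $\Alb(X)\to C$ has connected neutral component together with Poincaré reducibility.

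Combining the three cases shows that $\CC^1(X)$ is finite.
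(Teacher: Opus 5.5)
Your proof is correct. Its skeleton is the paper's: reduce to a fibration $f\colon X\to C$ with $f_*\Oo_X=\Oo_C$, then split by $g(C)$. The differences are in the two nontrivial cases.

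\textbf{Genus at least two.} You cite de Franchis--Severi/Maehara, which the paper mentions only as an alternative. Its main argument is self-contained. If $f\colon X\to C$ and $f'\colon X\to B$ are non-isomorphic such contractions, then by rigidity $f'$ contracts no fibre of $f$. So $(f,f')\colon X\to C\times B$ is generically finite, $K_X$ is big, and this contradicts $\kappa(K_X+\Delta)=1$. That argument uses the log-canonical-model hypothesis and gives the sharper conclusion that at most one such target occurs. Your citation ignores the hypothesis and yields only finiteness. Also, \cite{KO75} bounds maps to a \emph{fixed} target rather than the set of targets, so \cite{Mae83} or classical de Franchis--Severi is the right reference.

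\textbf{Genus one.} You work on the dual side. The equality $f_*\Oo_X=\Oo_C$ and the projection formula make $f^*\colon\operatorname{Pic}^0(C)\to\operatorname{Pic}^0(X)$ injective. In characteristic zero the kernel is reduced, so $f^*$ is a closed embedding. It is worth saying this explicitly. Hence $C\simeq\operatorname{Pic}^0(C)$ is an abelian subvariety of $\operatorname{Pic}^0(X)$, and the second clause of Theorem~\ref{thm:loz} applies directly.

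The paper instead factors $f$ as $X\to\Alb(X)\xrightarrow{q}C$. It uses connectedness of the fibres of $f$ to show that $\operatorname{Ker}(q)$ is connected, hence an abelian subvariety. It then applies the orbit-finiteness clause to $C\simeq\Alb(X)/\operatorname{Ker}(q)$.

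The two arguments are dual. Your injectivity step is, if anything, tidier than the paper's connectedness-of-kernel step. The fallback you sketch is therefore unnecessary. As phrased it would also be too weak: every kernel has a connected neutral component, and Poincar\'e reducibility alone recovers $C$ only up to isogeny. What is actually needed is connectedness of the whole kernel.
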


\begin{proof}
Let
\[
 f\colon X\dto C
\]
be the log canonical model of a klt pair $(X,\Delta)$ with $\kappa(K_X+\Delta)=1$. By the log surface minimal model program, $f$ is a contraction morphism.

If $C\simeq\Pp^1$, there is only one isomorphism class.

Suppose that \(g(C)\geq 2\). We show that there is at most one such
contraction up to isomorphism of the target. Let
\[
f\colon X\to C
\qquad\text{and}\qquad
f'\colon X\to B
\]
be two log canonical model contractions with
\[
g(C),~g(B)\geq 2.
\]

If $f'$ contracts a fiber $f^{-1}(c)$ of $f$, then, by the rigidity
lemma \cite[Lemma~1.15]{Deb01}, there exists a Zariski open
neighborhood $U\subset C$ of $c$ and a factorization
\[f'|_{f^{-1}(U)}\colon f^{-1}(U) \to U \xrightarrow{h} B.\] As $C$ and $B$ are smooth projective curves, $h$ extends to a morphism $\ti h \colon C \to B$ such that $f'= \ti h \circ f$. Because both $f$ and $f'$ have connected fibers, the finite morphism $\ti h\colon C\to B$ has degree one and thus $\ti h$ is an isomorphism. Consequently, if \(f\) and \(f'\) are not isomorphic as contractions,
then \(f'\) contracts no fiber of \(f\). Hence the restriction of \(f'\)
to every fiber of \(f\) is nonconstant, and therefore surjective onto
\(B\). It follows that
\[
\theta\coloneqq(f,f')\colon X\to C\times B
\]
is dominant. Since \(X\) and \(C\times B\) are surfaces, \(\theta\)
is generically finite. By the ramification formula, there exists an effective divisor \(R\) on
\(X\) such that
\[
K_X\sim \theta^*K_{C\times B}+R.
\]
Since \(g(C),g(B)\geq 2\), the canonical
divisor \(K_{C\times B}\) is ample. Thus \(K_X\) is big.
This gives $\kappa(K_X+\Delta)=2$, contrary to the initial assumption $\kappa(K_X+\Delta)=1$. Therefore, there is at most one 1-dimensional log canonical model of $X$ of genus at least two, up to isomorphism. Alternatively, the finiteness also follows from the Severi-Maehara theorem; see \cite[Main Theorem]{Mae83}.

Finally, suppose that $C$ is an elliptic curve. Choose a point
$x_0\in X$ and take $f(x_0)$ as the origin of $C$. By the universal
property of the Albanese morphism, $f$ factors as
\[
X\xrightarrow{a_X}A\coloneqq\Alb(X)\xrightarrow{q}C,
\]
where $q$ is a surjective homomorphism of abelian varieties.

We claim that the kernel $\Ker(q)$ of $q$ is connected. Let $\Ker(q)^0$ be the identity component of $\Ker(q)$. We have 
\[
f\colon X \xrightarrow{a_X} A \to A/\Ker(q)^0 \to A/\Ker(q) \simeq C.
\] If $\Ker(q) \neq \Ker(q)^0$, then for any point $c_0$ of $C$, the preimage $f^{-1}(c_0)$ is disconnected. This is a contradiction.

If $\tau\colon A\to A$ is an origin-preserving automorphism, then, for every abelian subvariety
$K\subset A$, we have $A/K\simeq A/\tau(K)$. Therefore, by
Theorem~\ref{thm:loz}, there are only finitely many possibilities for
$C\simeq A/\Ker(q)$.
\end{proof}

\begin{remark}
There can be infinitely many contraction morphisms with the same elliptic
curve as target. For each primitive pair \((m,n)\in\mathbb Z^2\), the morphism
\[
f_{m,n}\colon E\times E\longrightarrow E,
\qquad (x,y)\longmapsto mx+ny,
\]
has connected fibers and can be realized as the log canonical model of a klt
pair.

Moreover, the connectedness of the fibers is essential for the finiteness of the
targets. Indeed, there exist infinitely many pairwise non-isomorphic elliptic
curves \(E_i\) admitting isogenies \(\varphi_i\colon E\to E_i\). Consequently,
the morphisms
\[
E\times\Pp^1\longrightarrow E_i,
\qquad
(x,t)\longmapsto\varphi_i(x),
\]
have disconnected fibers and pairwise non-isomorphic targets. The argument for elliptic-curve targets extends directly to
abelian-variety targets. However, we do not know whether finiteness
continues to hold when the targets are Calabi-Yau varieties.
\end{remark}

\subsection{Irregular surfaces}

Recall that a normal projective surface $X$ is irregular if $q(X)>0$.

\begin{theorem}\label{thm:irregular-surface}
Let $X$ be a smooth projective irregular surface. Then $\CC(X)$ is finite.
\end{theorem}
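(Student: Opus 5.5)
We decompose $\CC(X)=\CC^0(X)\cup\CC^1(X)\cup\CC^2(X)$. The set $\CC^0(X)$ consists of the single point. The set $\CC^1(X)$ is finite by Theorem~\ref{thm:curve-targets}. It therefore suffices to show that $\CC^2(X)$ is finite. Proposition~\ref{prop:finite-negative-curves} reduces this to showing that an irregular surface has only finitely many smooth rational curves of negative self-intersection, or, failing that, that the set of possible exceptional configurations is finite up to $\Aut(X)$. The key input is the Albanese map $a_X\colon X\to A=\Alb(X)$, with $\dim A=q(X)>0$. Every rational curve on $X$ is contracted by $a_X$, since abelian varieties contain no rational curves.

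\emph{Case 1: $\dim a_X(X)=2$.} Every rational curve lies in the exceptional locus of the generically finite map $X\to a_X(X)$. More precisely, it lies either in the positive-dimensional fibers of the Stein factorization $X\to X'\to a_X(X)$, or in the exceptional curves of $X\to X'$. These exceptional curves form a proper closed subset consisting of finitely many curves. Hence $X$ has only finitely many rational curves, and Proposition~\ref{prop:finite-negative-curves} applies.

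\emph{Case 2: $\dim a_X(X)=1$.} The Stein factorization gives a fibration $\pi\colon X\to B$ onto a smooth curve of genus $q(X)\ge 1$. Every rational curve maps to a point of $B$, since $g(B)\ge1$, so it is a component of a fiber. Each rational curve of negative self-intersection is therefore a component of a reducible fiber. There are only finitely many reducible fibers, each with finitely many components, so again there are finitely many such curves and Proposition~\ref{prop:finite-negative-curves} applies.

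A subtlety is that fiber components could a priori vary in families. They cannot: a curve with $C^2<0$ is rigid, and all of them sit in the finitely many singular fibers. The only genuinely delicate point is the Stein factorization step in Case 1. There we need the positive-dimensional fibers of $X\to X'$ to be finite in number, and this holds because a generically finite proper morphism of surfaces contracts only finitely many curves. Once these facts are checked, the argument is complete and no automorphism-group input such as Theorem~\ref{thm:loz} is needed beyond its use in $\CC^1(X)$.

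I expect the main obstacle to be none of the steps above, since both cases are elementary. The real worry is whether Proposition~\ref{prop:finite-negative-curves} covers all two-dimensional models. It does, because its proof shows that all exceptional curves are smooth rational curves with negative self-intersection. If some issue arose, I would fall back on the rigidity argument in that proof, applied to the finitely many curves contracted by $a_X$ or $\pi$.
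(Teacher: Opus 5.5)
Your proposal is correct and follows essentially the same route as the paper. You reduce to $\CC^2(X)$ via Theorem~\ref{thm:curve-targets}, then use the Stein factorization of the Albanese map to get finitely many negative smooth rational curves. In the case of a two-dimensional image these are the exceptional curves of a birational morphism; in the case of a one-dimensional image they are components of the finitely many reducible fibers. You then conclude with Proposition~\ref{prop:finite-negative-curves}.
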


\begin{proof}
By Theorem~\ref{thm:curve-targets}, it suffices to prove that
$\CC^2(X)$ is finite, since the only zero-dimensional target is a
point.

Let
\[
a_X\colon X\longrightarrow \Alb(X)
\]
be the Albanese morphism, and write its Stein factorization as
\[
X\xrightarrow{h}S\xrightarrow{\nu}a_X(X).
\]
Since $q(X)>0$, we have $\dim S\in\{1,2\}$. Every rational curve on $X$ is contracted by $a_X$, because every
morphism from $\Pp^1$ to an abelian variety is constant. Since $\nu$
is finite, every rational curve on $X$ is therefore contracted by
$h$.

Suppose first that $\dim S=2$. Then $h$ is birational, and hence it
has only finitely many exceptional curves. Thus $X$ contains only
finitely many rational curves. Suppose now that $\dim S=1$. Every rational curve on $X$ is contained
in a fiber of $h$. All but finitely many fibers of \(h\) are smooth and irreducible. A negative rational curve must therefore be a component of one of the finitely many reducible fibers. Consequently, there are only finitely many rational curves with negative self-intersection. In either case, Proposition~\ref{prop:finite-negative-curves} shows
that $\CC^2(X)$ is finite.
\end{proof}

\subsection{Minimal surfaces}\label{subsec:minimal surfaces}

We prove Theorem~\ref{thm:minimal surfaces}, which states that for a
smooth projective minimal surface $X$, the set $\CC(X)$ is finite.

\begin{proof}[Proof of Theorem~\ref{thm:minimal surfaces}]
By the classification of minimal surfaces, a smooth projective minimal surface $X$ belongs to one of the following classes; see \cite[Chapter VI]{BHPV04}.

(I) A minimal rational surface is either $\Pp^2$ or a Hirzebruch surface
$\mathbb F_n$ with $n=0$ or $n\geq2$. All these surfaces are toric and
hence are Mori dream surfaces. It follows that $\CC(X)$ is finite; see \cite{HK00} for more details.

(II) A minimal ruled surface over a curve of positive genus is irregular. Thus, $\CC(X)$ is finite by Theorem~\ref{thm:irregular-surface}.

(III) A minimal surface of Kodaira dimension $0$ is a Calabi-Yau surface. Thus, $\CC(X)$ is finite by Theorem~\ref{thm:finite lc models for CY}.

(IV) For a minimal surface $X$ of general type, $\CC(X)$ is finite by Theorem~\ref{thm:general-type-surface}.

(V) The remaining class is a minimal properly elliptic surface,
which has Kodaira dimension $1$. A minimal properly elliptic surface over a curve of positive genus is irregular. Thus, $\CC(X)$ is finite by Theorem~\ref{thm:irregular-surface}. Hence the only remaining case is that of a minimal properly elliptic surface over $\Pp^1$. To show the finiteness of log canonical models of such surfaces, we argue in several steps.

\medskip
\noindent
\textit{Step 1: a uniform bound for horizontal smooth rational curves.}

Let
\[
    \pi:X\longrightarrow \Pp^1
\]
be a smooth projective relatively minimal genus-one surface with
$\kappa(X)=1$. If $q(X)>0$, then the assertion follows from Theorem~\ref{thm:irregular-surface}.
We therefore assume throughout the proof that
\[
    q(X)=0.
\]
Let $F$ denote the numerical class of a general fiber.  By the canonical bundle formula and $\kappa(X)=1$, we have
\begin{equation}\label{eq:elliptic-K-aF}
    K_X\equiv aF,
    \qquad a\in\Qq_{>0},
    \qquad K_X^2=F^2=0.
\end{equation}

Let $C\subset X$ be a horizontal smooth rational curve, and put
\[
    d_C:=C\cdot F>0.
\]
By adjunction and \eqref{eq:elliptic-K-aF},
\begin{equation}\label{eq:elliptic-rational-square-general}
    C^2=-2-a d_C.
\end{equation}
We apply \cite[Theorem~1.3(i)]{Miy08} with $\alpha=1$ and $g(C)=0$.  The
inequality in \cite[Theorem~1.3(i)]{Miy08} reads
\[
 \frac12\bigl(C^2+3K_X\cdot C+6\bigr)
 -2\bigl(K_X\cdot C+3\bigr)
 +3c_2(X)-K_X^2\geq0.
\]
Plugging in \eqref{eq:elliptic-K-aF} and \eqref{eq:elliptic-rational-square-general}, we obtain
\[
    K_X\cdot C\leq 3c_2(X)-4.
\]
Hence
\begin{equation}\label{eq:uniform-degree-bound-general-genus-one}
    1\leq d_C\leq
    M_X:=\max\left\{1,
    \left\lfloor\frac{3c_2(X)-4}{a}\right\rfloor\right\}.
\end{equation}
Thus both $d_C$ and $C^2$ range over finite sets.

There are only finitely many vertical smooth rational curves on $X$.  Indeed,
a smooth fiber has genus one, so every vertical smooth rational curve is an
irreducible component of one of the finitely many non-smooth fibers.

\medskip
\noindent
\textit{Step 2: a finitely generated group of Jacobian translations.}

Set
\[
    K\coloneqq \Cc(\Pp^1),\qquad T\coloneqq X_\eta,
    \qquad J\coloneqq \operatorname{Pic}^0_{T/K}.
\] Here \(K\) is the function field of \(\mathbb P^1\), \(T\) is the generic fiber, a genus-one curve over \(K\), and \(J\) is its Jacobian elliptic curve. The curve $T$ is a 
principal homogeneous space under $J$; see \cite[Chapter~X, Exercise~10.3(a) and Theorem~3.8]{Sil09}. Thus every
$Q\in J(K)$ acts on $T$ by a translation
\[
    t_Q:T\longrightarrow T.
\]
After passing to the algebraic closure $\bar K$ of $K$ and
identifying $T_{\bar K}$ with $J_{\bar K}$ by choosing an origin,
this action becomes translation by $Q_{\bar K}$ for the elliptic
curve group law. The resulting action on $T_{\bar K}$ is independent
of this choice; see \cite[Chapter~III, Proposition~3.4 and
Chapter~X, Theorem~3.8]{Sil09}.

Since \(\pi:X\to \Pp^1\) is relatively minimal, \(t_Q\) extends to an automorphism of \(X\) over \(\Pp^1\). These extensions define an injective homomorphism
\begin{equation}\label{eq:t_Q}
   J(K)\hookrightarrow\operatorname{Aut}(X/\Pp^1),
\qquad Q\longmapsto t_Q. 
\end{equation}

Define
\[
    P_F\coloneqq \{L\in\operatorname{Pic}(X)\mid c_1(L)\cdot F=0\}
\]
and let 
\[\theta: P_F\to
       \operatorname{Pic}^0(T), \quad L \mapsto L_\eta
       \] be the natural map. Set
\begin{equation}\label{eq:translation-subgroup-G}
    G\coloneqq\operatorname{Im}(\theta)\subseteq J(K),
\end{equation}
where $J(K)$ is the set of $K$-points.
Since $q(X)=0$, we have $\operatorname{Pic}^0(X)=0$, and therefore
$\operatorname{Pic}(X)=\NS(X)$ is a finitely generated abelian group.  Hence
$G$ is finitely generated. In particular,
\begin{equation}\label{eq:GmoddG-finite}
    G/dG \text{~is finite for every~}d>0.
\end{equation}

Fix $d>0$ and suppose that there is a horizontal integral curve of fiber degree
$d$.  Choose one such curve $C_0$.  For every horizontal integral curve $C$ of
the same degree define
\begin{equation}\label{eq:lambda-d-no-section}
    \lambda_d(C):=
    \bigl[\Oo_T(C_\eta-(C_0)_\eta)\bigr]\in G.
\end{equation}

We claim that
\begin{equation}\label{eq:translation-degree-d-formula}
    \lambda_d(t_Q(C))=\lambda_d(C)+dQ.
\end{equation}
Since the natural map $J(K)\to J(\bar K)$ is injective, it suffices to show the equation over the algebraic closure $\bar K$ of $K$. Write
$\bar T=T\times_K\bar K$ and $\bar Q\in J(\bar K)$ for the
image of $Q$. The base change of $t_Q$ is $t_{\bar Q}$.
Choose an origin $O\in\bar T(\bar K)$, giving the identification
\[
    \iota_O:\bar T\xrightarrow{\sim}J_{\bar K},
    \qquad p\longmapsto[\mathcal O_{\bar T}(p-O)].
\]
As mentioned above, $t_{\bar Q}$ is compatible with the elliptic curve group law under this identification, hence we have
\[
    \iota_O(t_{\bar Q}(p))=\iota_O(p)+\bar Q.
\]
This implies that
\begin{equation}\label{eq:bar Q}
        [\mathcal O_{\bar T}(t_{\bar Q}(p)-p)]=[\mathcal O_{\bar T}(t_{\bar Q}(p)-O)]-[\mathcal O_{\bar T}(p-O)]
    =\iota_O(t_{\bar Q}(p))-\iota_O(p)
    =\bar Q.
\end{equation}
Write
\[
    D_C\coloneqq (C_\eta)_{\bar K}=\sum_i n_i(p_i),
    \qquad \sum_i n_i=d,
    \qquad D_0\coloneqq ((C_0)_\eta)_{\bar K}.
\]
We have
\[
    ((t_Q(C))_\eta)_{\bar K}
    =(t_{\bar Q})(D_C)
    =\sum_i n_i(t_{\bar Q}(p_i)).
\]
Writing $\overline{\lambda_d(C)}$ for the image of
$\lambda_d(C)$ in $J(\bar K)$, by \eqref{eq:bar Q}, we obtain
\[
\begin{aligned}
    \overline{\lambda_d(t_Q(C))}
        -\overline{\lambda_d(C)}
    &=
    [\mathcal O_{\bar T}((t_{\bar Q})(D_C)-D_0)]
        -[\mathcal O_{\bar T}(D_C-D_0)]\\
    &=
    [\mathcal O_{\bar T}((t_{\bar Q})(D_C)-D_C)]\\
    &=
    \sum_i n_i
    [\mathcal O_{\bar T}(t_{\bar Q}(p_i)-p_i)]\\
    &=d\bar Q.
\end{aligned}
\]
This proves \eqref{eq:translation-degree-d-formula}.

\medskip
\noindent
\textit{Step 3: reducible fibers contribute only finitely many additional data.}

Let
\[
    \Theta_1,\ldots,\Theta_s
\]
be all irreducible components of all fibers whose reduced support is
reducible.  There are only finitely many such curves.  If a scheme-theoretic
fiber is
\[
    F_p=\sum_j n_{p,j}\Theta_{p,j},
    \qquad n_{p,j}>0,
\]
then for any horizontal curve $C$ of degree $d$,
\begin{equation}\label{eq:component-intersection-profile}
    d=C\cdot F_p
      =\sum_j n_{p,j}(C\cdot\Theta_{p,j}).
\end{equation}
Each $C\cdot\Theta_{p,j}$ is a nonnegative integer, and hence
\[
    0\leq C\cdot\Theta_{p,j}
    \leq\left\lfloor\frac d{n_{p,j}}\right\rfloor.
\]
Thus, for fixed $d$, the vector
\begin{equation}\label{eq:fiber-component-profile}
    \mathbf v(C):=(C\cdot\Theta_1,\ldots,C\cdot\Theta_s)
\end{equation}
has only finitely many possible values.

Next, we claim that the generic restriction together with this
intersection vector determines a negative curve uniquely.
To be precise, let $C,C'\subset X$ be horizontal integral curves satisfying
\[
    C\cdot F=C'\cdot F=d>0,
    \qquad
    C^2=(C')^2=b<0.
\]
Assume that
\[
    \Oo_T(C_\eta)\simeq\Oo_T(C'_\eta)
\]
and
\[
    C\cdot\Theta_i=C'\cdot\Theta_i
    \qquad\text{for }1\leq i\leq s.
\]
Then $C=C'$.

\noindent Indeed, equality of the restrictions to the generic fiber gives
\begin{equation}\label{eq:vertical-difference-general}
    C-C'\sim V
\end{equation}
for a vertical integral divisor $V$.  We have
\[
    V\cdot F=0,
    \qquad
    V\cdot\Theta_i=0\quad\text{for all }i.
\]
Every irreducible component of the vertical divisor \(V\) is either a component \(\Theta_i\) of a reducible fiber, or the reduced support \(E\) of a fiber whose reduced support is irreducible. In the first case \(V\cdot\Theta_i=0\) by assumption, while in the second case \(mE\sim F\) for some \(m>0\), and hence
\[
m(V\cdot E)=V\cdot F=0.
\]
Thus \(V\) is orthogonal to every irreducible component of its support. We obtain
\[
V^2=0.
\]

Choose an ample divisor $H$ and set
\[
    W:=V-\frac{V\cdot H}{F\cdot H}F.
\]
Then $W\cdot H=0$ and $W^2=0$.  By the Hodge index theorem, $W\equiv0$.
Thus $V\equiv\beta F$ for some $\beta\in\Rr$.  Since $C^2=(C')^2$,
\[
    0=(C-C')\cdot(C+C')=2\beta d,
\]
so $\beta=0$.  Hence $C\equiv C'$.  If $C\neq C'$, then
\[
    0\leq C\cdot C'=C^2=b<0,
\]
a contradiction.  This proves the claim.

Combining the claim with \eqref{eq:GmoddG-finite}, \eqref{eq:translation-degree-d-formula} and
\eqref{eq:fiber-component-profile}, we obtain the following conclusion:
for fixed $d>0$ and $b<0$, the group $G$ has only finitely many orbits on
horizontal integral curves satisfying
\[
    C\cdot F=d,\qquad C^2=b.
\]
By \eqref{eq:uniform-degree-bound-general-genus-one} and
\eqref{eq:elliptic-rational-square-general}, $G$ therefore has only finitely
many orbits on the set of all horizontal smooth rational curves.

\medskip
\noindent
\textit{Step 4: negative-definite configurations.}

Finiteness of the orbits of individual curves does not by itself imply
finiteness of the orbits of configurations.  We now control the relative
positions of the curves.

Choose a finite set $\mathcal R$ of representatives for the $G$-orbits of
horizontal smooth rational curves.  Let
\[
    \Gamma=C_1+\cdots+C_r
\]
be a nonempty reduced configuration of smooth rational curves whose
intersection matrix is negative definite.  If all components are vertical,
there are only finitely many possibilities by Step~1.  Otherwise, after
applying a translation to the whole configuration, we may suppose that one
horizontal component is
\[
    A\in\mathcal R.
\]
Put $d_A\coloneqq A\cdot F$.

Let $D\neq A$ be another horizontal component, and write
\[
    d\coloneqq D\cdot F,
    \qquad
    u\coloneqq A\cdot D.
\]
Then $1\leq d\leq M_X$ and $u\geq0$.  Since the intersection matrix of $\Gamma$ is negative definite,
\begin{equation}\label{eq:two-curve-negative-definite-bound}
    u^2<A^2D^2=(2+ad_A)(2+ad).
\end{equation}
Thus, for fixed $A$, only finitely many pairs $(d,u)$ can occur.

We claim that, for fixed $A,d,u$, only finitely many numerical classes of $D$
can occur.  Let $\NS(X)$ be the N\'eron-Severi group and let
\[
    L\coloneqq \NS(X)/\NS(X)_{\mathrm{tors}}.
\]
Consider
\[
    U\coloneqq \operatorname{span}_{\Rr}\{[F],[A]\}\subset L_{\Rr}.
\]
The intersection matrix on $U$ is
\[
    \begin{pmatrix}
        0&d_A\\
        d_A&A^2
    \end{pmatrix},
\]
whose determinant is $-d_A^2<0$.  Thus $U$ has signature $(1,1)$ and, by the
Hodge index theorem, $U^\perp$ is negative definite.

The two conditions
\[
    D\cdot F=d,
    \qquad
    D\cdot A=u
\]
uniquely determine the $U$-component of $[D]$.  Explicitly, write
\[
    [D]=x_0+w,
    \qquad w\in U^\perp,
\]
where
\begin{equation}\label{eq:x0-lattice-general}
    x_0=
    \frac d{d_A}[A]
    +\frac{u-(d/d_A)A^2}{d_A}[F].
\end{equation}
Since $D$ is smooth and rational,
\[
    D^2=-2-ad
\]
is fixed.  Hence $w^2=D^2-x_0^2$ is fixed.  A fixed-square locus in the
negative-definite real vector space $U^\perp$ is compact (or empty), while
$L$ is a lattice.  Therefore only finitely many classes $[D]\in L$ occur.
Moreover, a numerical class of negative square contains at most one integral
curve.  Hence, for fixed $A,d,u$, there are only finitely many possibilities
for $D$.

It follows that, after normalizing one horizontal component into
$\mathcal R$, every other horizontal component belongs to a fixed finite
collection of curves.  The vertical components also belong to a fixed finite
collection.  Consequently, $G$ has only finitely many orbits on the set of all
negative-definite reduced configurations of smooth rational curves.

\medskip
\noindent
\textit{Step 5: finiteness of log canonical models.}

Let
\[
    f:X\longrightarrow Z
\]
be a two-dimensional log canonical model of a klt pair on $X$.  As in the
proof of Proposition~\ref{prop:finite-negative-curves}, the map $f$ is a
birational contraction morphism, every irreducible component of
$\operatorname{Exc}(f)$ is a smooth rational curve, and the intersection
matrix of the reduced exceptional locus is negative definite.  By Step~4,
up to the action of $G\subset\operatorname{Aut}(X)$, there are only finitely
many possibilities for $\operatorname{Exc}(f)_{\mathrm{red}}$.  Precomposing
$f$ by an automorphism of $X$ does not change the isomorphism class of its
target, while the rigidity lemma \cite[Lemma~1.15]{Deb01} shows that a
birational contraction is determined, up to isomorphism of the target, by the
curves that it contracts.  Thus $\CC^2(X)$ is finite.

The finiteness of $\CC^1(X)$ follows from Theorem~\ref{thm:curve-targets}, and
there is at most one possible zero-dimensional target.  Hence $\CC(X)$ is finite.
\end{proof}

\section{Surface counterexamples for finiteness of log canonical models}\label{sec:surface-counterexamples}

In this section, we show that outside the cases covered in Section~\ref{sec:positive}, log canonical models could be infinite for non-minimal surfaces.

\subsection{Criteria for constructing distinct log canonical models}

We explain how to realize the contractions of negative rational
curves as log canonical models and how to distinguish their targets.

\begin{proposition}\label{prop:realize-negative-curve}
Let $X$ be a smooth projective surface and let $C\simeq\Pp^1$ satisfy $C^2=-q<0$. There is an effective $\Qq$-divisor $\Delta_C$ such that $(X,\Delta_C)$ is klt, $K_X+\Delta_C$ is big, and its log canonical model is the contraction of $C$.
\end{proposition}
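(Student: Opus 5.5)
The plan is to build the contraction first and then pull back an ample klt log canonical divisor from the target.

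\emph{Step 1: contract $C$.} Since $C\simeq\Pp^1$ and $C^2=-q<0$, Artin's contractibility criterion gives a birational morphism $\pi\colon X\to Z$ onto a normal surface with exceptional locus $C$. The singularity of $Z$ is rational; it is the cyclic quotient singularity $\frac1q(1,1)$, and for $q=1$ the point is smooth. I will use Artin's theorem that contractions of rational configurations are projective, so $Z$ is a normal projective surface. Alternatively, one can check directly that for an ample $H$ the divisor $H+\frac{H\cdot C}{q}C$ is nef, big, orthogonal to $C$, and semiample. The surface $Z$ has klt singularities, and adjunction gives
\[
\pi^*K_Z=K_X+\tfrac{q-2}{q}\,C .
\]

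\emph{Step 2: choose the boundary.} Fix an ample Cartier divisor $H_Z$ on $Z$ and an integer $m$ large enough that $K_Z+H_Z$ is ample; this is possible since $K_Z$ is $\Qq$-Cartier. Choose a general member $G\in|NH_Z|$ with $N\gg0$, which avoids the singular point and is smooth, and set $\Delta_Z=\frac1N G$. Then $(Z,\Delta_Z)$ is klt and $K_Z+\Delta_Z\equiv K_Z+H_Z$ is ample. The boundary depends on $q$:
\begin{itemize}
\item If $q\ge2$, put $\Delta_C=\frac{q-2}{q}C+\pi^*\Delta_Z$. Then $K_X+\Delta_C=\pi^*(K_Z+\Delta_Z)$. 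The coefficient of $C$ lies in $[0,1)$, and $\pi^*\Delta_Z$ is a smooth curve disjoint from $C$ with small coefficient, so $(X,\Delta_C)$ is klt and $K_X+\Delta_C$ is big.
\item If $q=1$, put $\Delta_C=\pi^*\Delta_Z$. Then $K_X+\Delta_C=\pi^*(K_Z+\Delta_Z)+C$, where $C$ is $\pi$-exceptional and effective.
\end{itemize}

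\emph{Step 3: identify the log canonical model.} In both cases we have
\[
K_X+\Delta_C=\pi^*A+E
\]
with $A$ ample on $Z$ and $E\ge0$ supported on $C$, where $E=0$ or $E=C$. I verify the ample-model condition of \cite[Definition~3.6.5]{BCHM10} for $\pi$ directly. Every $B\in|K_X+\Delta_C|_{\Rr}$ satisfies $B\ge E$. This is automatic when $E=0$. When $E=C$, it follows from the negativity lemma: $B-E\equiv\pi^*A$ is $\pi$-nef, so $\pi_*(B-E)\ge0$ forces $B-E\ge0$. Hence $\pi$ is the ample model of $K_X+\Delta_C$, that is, the log canonical model of $(X,\Delta_C)$.

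The main obstacle is Step 1, specifically the projectivity of $Z$ (on a surface, Artin's contraction a priori yields only an algebraic space). I would cite Artin's rationality result directly. As a backup, I would verify semiampleness of $H+\frac{H\cdot C}{q}C$, for instance via the basepoint-free theorem applied to a klt pair $(X,\frac{q-2}{q}C+\epsilon H')$, where $H'$ is general ample, the coefficient $\epsilon$ is small, and $q\ge2$; the case $q=1$ is Castelnuovo's theorem.
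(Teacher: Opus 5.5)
Your proof is correct and has the same skeleton as the paper's. You contract $C$ and use $\pi^*K_Z=K_X+\frac{q-2}{q}C$. You take $\Delta_C$ to be $\frac{q-2}{q}C$ plus the pullback of a small multiple of a general divisor that avoids the singular point and makes $K_Z+\Delta_Z$ ample. For $q=1$ you absorb the extra $+C$ as an exceptional fixed part.

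The genuine difference is how the contraction is produced. The paper shows that $H+\frac{H\cdot C}{q}C$ is nef and vanishes only on $C$. Hence $\Rr_{\geq0}[C]$ is a $(K_X+(\frac{q-2}{q}+\varepsilon)C)$-negative extremal ray, and the contraction theorem gives a projective, $\Qq$-factorial target; Castelnuovo covers $q=1$. You instead invoke Artin's theorem that configurations contracting to rational singularities contract to projective surfaces. This treats all $q\geq1$ uniformly, but it relies on a deeper classical input outside the MMP toolkit the paper uses elsewhere. You also need $K_Z$ to be $\Qq$-Cartier, which the paper gets for free from $\Qq$-factoriality. Your Step~3 replaces the paper's section-ring comparison for $q=1$ with the negativity lemma; this is fine because $B-E$ is $\pi$-numerically trivial, so both $B-E$ and $-(B-E)$ are $\pi$-nef.

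Your fallback argument fails as written. Since $(K_X+\frac{q-2}{q}C)\cdot C=0$, the pair $(X,\frac{q-2}{q}C+\epsilon H')$ satisfies $(K_X+\frac{q-2}{q}C+\epsilon H')\cdot C=\epsilon H'\cdot C>0$. Meanwhile $P\cdot C=0$ for $P=H+\frac{H\cdot C}{q}C$. So $aP-(K_X+\frac{q-2}{q}C+\epsilon H')$ is negative on $C$ for every $a$, and the basepoint-free theorem cannot be applied.

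There are two repairs. You can drop $\epsilon H'$ and use the klt pair $(X,\frac{q-2}{q}C)$. Fix $c$ with $cH-(K_X+\frac{q-2}{q}C)$ ample. Then $aP-(K_X+\frac{q-2}{q}C)$ is zero on $C$ and at least $(a-c)H\cdot\Gamma$ on every other curve $\Gamma$, since $P\cdot\Gamma\geq H\cdot\Gamma$. Hence it is nef and big for $a\gg0$. Alternatively, raise the coefficient of $C$ to $\frac{q-2}{q}+\varepsilon$, as the paper does, which makes the log canonical divisor negative on $C$.
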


\begin{proof}
Assume first that $q\geq2$ and set $b\coloneqq1-2/q\in[0,1)$. Adjunction gives $(K_X+bC)\cdot C=0$. For an ample divisor $H$, the divisor
\[
 P\coloneqq H+\frac{H\cdot C}{q}C
\]
vanishes on $C$, and is positive on every other irreducible curve. Hence, $P$ is nef. Thus $\Rr_{\geq0}[C]$ is an extremal ray of ${\NE}(X)$. For $0<\varepsilon\ll1$, it lies in the negative part of ${\NE}(X)$ for the klt pair $(X,(b+\varepsilon)C)$. The contraction theorem gives the contraction $f_C\colon X\to Y_C$ with $\Exc(f_C)=C$, and $Y_C$ is a $\Qq$-factorial surface. 

Writing $K_X=f_C^*K_{Y_C}+aC$ and intersecting with $C$ gives $a=-b$, hence $K_X+bC=f_C^*K_{Y_C}$. Choose an ample divisor $A_C$ on $Y_C$ such that $A_C-K_{Y_C}$ is ample. For a sufficiently divisible $m\geq2$, take a general
\[
 G_C\in|m(A_C-K_{Y_C})|
\]
that does not pass through the point $f_C(C)$. Its strict transform $B_C$ is smooth and disjoint from $C$. Then
\[
 \Delta_C\coloneqq bC+\frac1mB_C,
 \qquad K_X+\Delta_C\sim_{\Qq}f_C^*A_C,
\]
so $f_C$ is the required log canonical model.

If $q=1$, let $f_C$ be the blow-down of $C$. Choose an ample divisor $A_C$ such that $A_C-K_{Y_C}$ is very ample, take a general $G_C\in|m(A_C-K_{Y_C})|$ with $m\geq2$ that does not pass through $f_C(C)$, and set $\Delta_C\coloneqq m^{-1}f_C^*G_C$. Then
\[
 K_X+\Delta_C\sim_{\Qq}f_C^*A_C+C.
\]
As $C$ is the exceptional curve, we have $R(X, m(K_X+\Delta_C))=R(X, m(f_C^*A_C))$ for a sufficiently divisible $m \in \Zz_{>0}$. Thus, $f_C$ is still the log canonical model of $(X, \De_C)$.
\end{proof}

The following result will be used to distinguish log canonical models.

\begin{lemma}\label{lem:lifting-minimal-resolutions}
Let $f_i\colon X\to Y_i$ for $i=1,2$ be birational morphisms from a
smooth projective surface to normal projective surfaces such that
\[
\Exc(f_i)=C_i,
\]
where $C_i$ is a smooth rational curve with $C_i^2\leq-2$. If $Y_1\simeq Y_2$, then there exists $\tau \in \Aut(X)$ such that $\tau(C_1)=C_2$.
\end{lemma}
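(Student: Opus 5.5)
The plan is to use the uniqueness of the minimal resolution of a normal surface singularity. Each $f_i$ contracts a single smooth rational curve $C_i$ with $C_i^2=-q_i\leq -2$. So $Y_i$ has exactly one singular point $y_i=f_i(C_i)$, a cyclic quotient singularity of type $\frac1{q_i}(1,1)$, and $f_i$ is an isomorphism $X\setminus C_i\simeq Y_i\setminus\{y_i\}$. Since $C_i^2\leq-2$, the curve $C_i$ is not a $(-1)$-curve. Hence $X$ is the minimal resolution of $Y_i$ in a neighborhood of $y_i$, and therefore $f_i\colon X\to Y_i$ is the minimal resolution of $Y_i$ globally. (Note that $y_i$ is genuinely singular: otherwise, by Castelnuovo, or because every exceptional curve over a smooth point contains a $(-1)$-curve, $C_i^2=-1$.)

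Now fix an isomorphism $\sigma\colon Y_1\xrightarrow{\sim}Y_2$. The composite $\sigma\circ f_1\colon X\to Y_2$ is again a minimal resolution of $Y_2$. The key input is the universal property of the minimal resolution of a normal surface: every resolution factors uniquely through it. Hence the minimal resolution is unique up to unique isomorphism over the base, and we get an isomorphism $\tau\colon X\to X$ with $f_2\circ\tau=\sigma\circ f_1$. Concretely, $\tau=f_2^{-1}\circ\sigma\circ f_1$ is a birational self-map of $X$ that is a morphism off $C_1$. By the factorization property it extends to a morphism, and symmetrically its inverse extends; no $(-1)$-curves are contracted because both resolutions are minimal (cf. \cite[Theorem~4.5]{KM98} or \cite{BHPV04}).

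Finally, we read off $\tau(C_1)=C_2$ from the commutative square. First, $\sigma$ maps the unique singular point $y_1$ to the unique singular point $y_2$. Then
\[
\tau(C_1)=\tau(f_1^{-1}(y_1))=f_2^{-1}(\sigma(y_1))=f_2^{-1}(y_2)=C_2.
\]

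The only delicate point is justifying that $f_2^{-1}\circ\sigma\circ f_1$ extends to a morphism across $C_1$. I would handle this directly rather than cite uniqueness. Resolve the map by blowing up points, $p\colon W\to X$ and $r\colon W\to X$, chosen minimally. Suppose $r$ contracts some last exceptional $(-1)$-curve $E$ of $p$. Then its image under $f_2\circ r$ is a point, so $r(E)$ lies in $C_2$ or is a point. A standard argument comparing the exceptional configurations over $y_2$ shows that $p$ must be an isomorphism, because $C_2$ contains no $(-1)$-curves. This is precisely the proof of uniqueness of minimal resolutions, which I expect to be the main (though classical) obstacle.
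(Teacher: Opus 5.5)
Your proposal is correct and follows essentially the same route as the paper. Both arguments first show that $y_i=f_i(C_i)$ is the unique singular point of $Y_i$, since a smooth point would force a $(-1)$-curve in $\Exc(f_i)$; they then conclude that $f_i$ is the minimal resolution and lift $\sigma\colon Y_1\xrightarrow{\sim}Y_2$ to $\tau\in\Aut(X)$ by uniqueness of the minimal resolution, so that $\tau(C_1)=f_2^{-1}(\sigma(y_1))=C_2$. The paper simply cites that uniqueness, so your sketched direct argument in the last paragraph is unnecessary, and so is the identification of the singularity as $\frac1{q_i}(1,1)$.
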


\begin{proof}
Set $y_i\coloneqq f_i(C_i)$. Since $\Exc(f_i)=C_i$, the morphism $f_i$
is an isomorphism away from $C_i$. Thus $Y_i$ is smooth away from
$y_i$. If $y_i$ were smooth, the factorization theorem for birational
morphisms between smooth surfaces would imply that $\Exc(f_i)$ contains
a $(-1)$-curve, contradicting $C_i^2\leq-2$. Hence $y_i$ is the unique
singular point of $Y_i$. Moreover, $f_i$ is the minimal resolution of
$Y_i$, because its exceptional locus contains no $(-1)$-curve.
Therefore, by the uniqueness of the minimal resolution, any isomorphism
$Y_1\simeq Y_2$ lifts to an automorphism $\tau\in\Aut(X)$ satisfying
$\tau(C_1)=C_2$.
\end{proof}

Combining the preceding results, we obtain the following construction,
which eliminates the automorphisms that could identify the contractions
of infinitely many negative curves.

\begin{proposition}\label{prop:very-general-blowup}
Let \(S\) be a smooth projective surface such that \(K_S\) is nef and
\(\Aut^0(S)\) is trivial. Suppose that \(S\) contains a sequence of pairwise
distinct smooth rational curves
\[
C_i\subset S,\qquad i\in\mathbb N,
\]
such that \(C_i^2\leq -2\). Let
\[
\mu\colon X\coloneqq\operatorname{Bl}_pS\longrightarrow S
\]
be the blow-up of a very general point $p\in S$. Then
\[
\Aut(X)=\{\Id_X\},
\]
and $\CC(X)$ is infinite.
\end{proposition}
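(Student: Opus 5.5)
The argument has two parts: first show that $\Aut(X)$ is trivial, then produce infinitely many pairwise non-isomorphic targets by contracting the strict transforms of the curves $C_i$.

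\emph{Triviality of $\Aut(X)$.} Since $K_S$ is nef, $S$ is minimal, so the exceptional curve $E=\mu^{-1}(p)$ is the unique $(-1)$-curve on $X$. Indeed, a $(-1)$-curve $E'\neq E$ satisfies $K_X\cdot E'=-1$. On the other hand,
\[
K_X\cdot E'=\mu^*K_S\cdot E'+E\cdot E'=K_S\cdot \mu_*E'+E\cdot E'\geq 0,
\]
a contradiction. Every $\tau\in\Aut(X)$ therefore preserves $E$ and descends to some $\sigma\in\Aut(S)$ with $\sigma(p)=p$. Because $\Aut^0(S)$ is trivial, $\Aut(S)$ is discrete. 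As $S$ is projective, $\Aut(S)$ is a group scheme with countably many components, so it is countable.

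For each $\sigma\neq\Id_S$, the fixed locus $\mathrm{Fix}(\sigma)$ is a proper closed subset of $S$. A very general $p$ avoids the countable union of these subsets, so $\sigma=\Id_S$. Then $\tau$ is the identity on the dense open set $X\setminus E$, hence $\tau=\Id_X$.

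\emph{Infinitely many targets.} Take $p$ off all the $C_i$; this is again a countable condition, so a very general $p$ satisfies it. Then the strict transform $\tilde C_i=\mu^*C_i$ is a smooth rational curve with $\tilde C_i^2=C_i^2\leq-2$, and the curves $\tilde C_i$ are pairwise distinct. By Proposition~\ref{prop:realize-negative-curve} there is a klt pair $(X,\Delta_i)$ whose log canonical model is the contraction $f_i\colon X\to Y_i$ with $\Exc(f_i)=\tilde C_i$, so $[Y_i]\in\CC(X)$. If $Y_i\simeq Y_j$, Lemma~\ref{lem:lifting-minimal-resolutions} gives $\tau\in\Aut(X)$ with $\tau(\tilde C_i)=\tilde C_j$. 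Since $\Aut(X)$ is trivial, $\tilde C_i=\tilde C_j$, so $i=j$. The $[Y_i]$ are therefore pairwise distinct and $\CC(X)$ is infinite.

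The main obstacle is the countability of $\Aut(S)$ together with the properness of each $\mathrm{Fix}(\sigma)$. For countability I would use that $\Aut(S)$ is a locally algebraic group with $\Aut^0(S)$ trivial; its components are then points indexed by a countable set, for instance via the action on an ample class and Hilbert schemes of graphs. Properness of $\mathrm{Fix}(\sigma)$ is immediate, since $\sigma\neq\Id_S$ cannot fix a dense set.
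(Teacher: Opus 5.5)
Your proof is correct and follows the paper's argument essentially step by step. You choose $p$ off the countably many curves $C_i$ and the fixed loci $\operatorname{Fix}(\sigma)$, descend automorphisms of $X$ to $\Aut(S)$ via the unique $(-1)$-curve to get $\Aut(X)=\{\Id_X\}$, and then combine Proposition~\ref{prop:realize-negative-curve} with Lemma~\ref{lem:lifting-minimal-resolutions}. Your computation $K_X\cdot E'=K_S\cdot\mu_*E'+E\cdot E'\geq 0$ fills in the uniqueness of the $(-1)$-curve, which the paper only asserts.
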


\begin{proof}
For every non-identity \(g\in\Aut(S)\), the fixed locus \(\operatorname{Fix}(g)\) is a
proper closed subset of \(S\). Since \(\Aut^0(S)\) is trivial, $\Aut(S)$ consists of at most countably many elements. Take
\begin{equation}\label{eq:very-general-p}
p\notin
\left(\bigcup_{i\in\mathbb N}C_i\right)
\cup
\left(\bigcup_{\Id_S\neq g\in\Aut(S)}\operatorname{Fix}(g)\right),
\end{equation}
which lies in the complement of at most countably many proper closed subsets.

Let \(E\) be the exceptional curve of the blow-up \(\mu\colon X\coloneqq\operatorname{Bl}_pS\to S\). As \(K_S\) is nef, \(E\) is the
unique \((-1)\)-curve on \(X\). Since every automorphism \(g\in\Aut(X)\) preserves \(E\), by the rigidity lemma, the automorphism \(g\) descends along \(\mu\) to an
automorphism \(\bar g\in\Aut(S)\) satisfying
\[
\bar g\circ\mu=\mu\circ g.
\]
Since \(\bar g(p)=p\), the choice of \(p\) in
\eqref{eq:very-general-p} implies that \(\bar g=\Id_S\). It follows that
\(g=\Id_X\). Thus
\[
\Aut(X)=\{\Id_X\}.
\]

Since \(p\notin C_i\) for every \(i\), the strict transforms of the curves
\(C_i\), which we denote by the same symbols, remain pairwise distinct and
satisfy
\[
C_i^2\leq-2.
\]
By Proposition~\ref{prop:realize-negative-curve}, for every \(i\) there is a klt pair
whose log canonical model
\[
f_i\colon X\longrightarrow Y_i
\]
contracts exactly \(C_i\).

If \(Y_i\simeq Y_j\), then Lemma~\ref{lem:lifting-minimal-resolutions} gives an
automorphism of \(X\) carrying \(C_i\) to \(C_j\). Since \(\Aut(X)\) is
trivial, this implies \(C_i=C_j\), and hence \(i=j\). Therefore the varieties
\(Y_i\) are pairwise non-isomorphic, so \(\CC(X)\) is infinite.
\end{proof}

\subsection{Elliptic surfaces with infinitely many sections}\label{subsec: elliptic surfaces}

  The following construction serves as the basis for the two-dimensional
counterexamples in this section and the three-dimensional counterexample
in the next section. The same notation will be used throughout the rest of the paper.

Choose a rational elliptic surface with a section
\[
r\colon S\longrightarrow\Pp^1
\]
with twelve singular fibers, all of type \(I_1\). Such a surface exists by
\cite[Corollary~10.3]{SS10}. Moreover, every rational elliptic surface is
obtained by blowing up the base points of a cubic pencil; see
\cite[Proposition~8.1]{SS10}.

Let \(O\) be the zero section and let \(F\) denote the class of a fiber.
By the canonical bundle formula
\cite[Theorem~6.8]{SS10}, there exists a line bundle
\(\mathcal L\) on \(\Pp^1\) with
\(\deg\mathcal L=-\chi(\Oo_S)=-1\) such that
\begin{equation}\label{eq:canonical-rational-elliptic}
\omega_S
\simeq
r^*\bigl(\omega_{\Pp^1}\otimes\mathcal L^{-1}\bigr)\simeq r^*\Oo_{\Pp^1}(-1).
\end{equation}
Equivalently, we have \(K_S\sim-F\).

Let $K\coloneqq\Cc(\Pp^1)$ be the field of rational functions of the base and let $E/K$ be the generic fiber of $r$, with origin induced by $O$. The Mordell-Weil group $\MW(r)$ is the group $E(K)$ of $K$-points of $E$. Equivalently, it is the group of sections of $r$, with addition defined fiberwise. Its height pairing gives the Mordell-Weil lattice modulo torsion.

Since all fibers of \(r\) are irreducible, \cite[Corollary~8.4]{SS10} identifies the group of sections with the $E_8$ lattice. With the height-pairing convention of \cite[\S~11.7]{SS10}, the Mordell-Weil lattice is the positive-definite $E_8$ lattice. In particular,
$
\operatorname{rank}\MW(r)=8.
$
Fix a section \(P\) of infinite order.

For $d\in\{2,3\}$, choose a degree-$d$ morphism
\[
 \gamma_d\colon\Pp^1\to\Pp^1
\]
whose branch locus is disjoint from the discriminant locus of $r$. Let
\[
\begin{tikzcd}
S_d \arrow[r,"\eta_d"] \arrow[d,"\pi_d"']&
S \arrow[d,"r"]\\
\Pp^1 \arrow[r,"\gamma_d"']&
\Pp^1
\end{tikzcd}
\]
be the resulting base change.

Since \(\gamma_d\) is \'etale over the
discriminant locus, the surface \(S_d\) is smooth and \(\pi_d\) has exactly
\(12d\) singular fibers, all of type \(I_1\); see
\cite[\S~5.2]{SS10}. In particular, \(\pi_d\) is relatively minimal.

Each fiber of type \(I_1\) has Euler number \(1\). By \cite[Theorem~6.10 and Corollary~6.11]{SS10},
\[
e(S_d)=12d, \quad\chi(\Oo_{S_d})=\frac{e(S_d)}{12}=d.
\]

By the canonical bundle formula, there exists a line bundle
\(\mathcal L_d\) on \(\Pp^1\) with
\[\deg\mathcal L_d=-\chi(\Oo_{S_d})=-d\] such that

\begin{equation}\label{eq:canonical-Sd}
\omega_{S_d}
\simeq
\pi_d^*
\bigl(\omega_{\Pp^1}\otimes\mathcal L_d^{-1}\bigr)\simeq
\pi_d^*\Oo_{\Pp^1}(d-2).
\end{equation}

Let \(K_d/K\) be the function field extension induced by \(\gamma_d\). The
injection
\[
E(K)\hookrightarrow E(K_d)
\]
shows that the pullback section of \(P\), again denoted by \(P\), has
infinite order. Thus
\[
C_n\coloneqq nP,\qquad n\in\Zz,
\]
are pairwise distinct sections of \(\pi_d\). Here $nP$ is the Mordell-Weil multiple: for $n>0$ it is the fiberwise sum of $n$ copies of $P$, $0P=O$, and for $n<0$ it is the inverse of $(-n)P$.

By adjunction and \eqref{eq:canonical-Sd}, 
\begin{equation}\label{eq:section-square}
C_n^2=-d
\end{equation}
for every \(n\in\Zz\).

For both $d=2$ and $d=3$, \cite[Theorem~6.12]{SS10} gives
\begin{equation}\label{eq:pic0-Sd}
 \Pic^0(S_d)\simeq\Pic^0(\Pp^1)=0.
\end{equation}

If \(d=2\), then \eqref{eq:canonical-Sd} gives \(\omega_{S_2}\simeq \Oo_{S_2}\).
Thus \(q(S_2)=0\), and \(S_2\) is a K3 surface.

If \(d=3\), then
\[
\omega_{S_3}\simeq\pi_3^*\Oo_{\Pp^1}(1).
\] 
Therefore, 
$
\kappa(S_3)=1
$
and $S_3$ is a properly elliptic surface with \(\pi_3\) the Iitaka fibration.

The surfaces $S_d$ are non-ruled and have $q(S_d)=0$ by \eqref{eq:pic0-Sd}. Hence $\Aut^0(S_d)=0$ by \cite[Corollary~4.7]{Han87}, and $\Aut(S_d)$ is countable.

\begin{proof}[Proof of Theorem~\ref{thm:surface-counterexamples}]

For $\kappa=0$ or $1$, apply Proposition~\ref{prop:very-general-blowup} to $S_d$ for
$d=2,3$. The canonical divisors of $S_d$ are nef by
\eqref{eq:canonical-Sd}. Since the surfaces $S_d$ are non-ruled and
satisfy $q(S_d)=0$, \cite[Corollary~4.7]{Han87} gives
$\Aut^0(S_d)=0$. Moreover, \eqref{eq:section-square} supplies
infinitely many distinct smooth rational curves with
self-intersection at most $-2$. As the Kodaira dimension is unchanged by a blow-up, this produces surfaces of Kodaira dimension $0$ and $1$ with infinitely many log canonical models.

\medskip

For $\kappa=-\infty$, we return to the rational elliptic surface 
\[r\colon S\to\Pp^1\] chosen at the beginning of Section~\ref{subsec: elliptic surfaces}, and retain the notation $O$, $F$, and $P$. Put
\[
 C_n\coloneqq nP\qquad(n\in\Zz).
\]
Since $K_S\sim-F$, where $F$ is a fiber, adjunction gives
\[
 -2=(K_S+C_n)\cdot C_n=-1+C_n^2.
\]
Thus every $C_n$ is a $(-1)$-curve.

Choose a closed point $b\in\Pp^1$ such that the fiber $F_b\coloneqq r^{-1}(b)$ is smooth
and the point
\[
P(b)\coloneqq P\cap F_b\in F_b
\]
is non-torsion with respect to the group law on $F_b$ with origin
$O(b)\coloneqq O\cap F_b$. Indeed, on the smooth locus of $r$, the locus where $P(t)$ is torsion is
contained in
\[
\bigcup_{m\in \Zz\backslash\{0\}}\{t\mid (mP)(t)=O(t)\}.
\]
Each set in this union is finite, since $mP$ and $O$ are distinct
sections. We may choose $b$ outside that union and the finite discriminant locus. Note that $C_n \cap F_b=\{nP(b)\}$ and the set $\{nP(b)\mid n\in\Zz\}$ is countable. Choose
\[
 p\in F_b\setminus\{nP(b)\mid n\in\Zz\},
\]
and let
\[
 \mu\colon X\coloneqq\operatorname{Bl}_pS\to S.
\]
Then $X$ is rational, and the strict transforms of the $C_n$, which we
continue to denote by $C_n$, remain $(-1)$-curves.

Let
\[
 h\colon S\to Z
\]
be the contraction of the zero section, and denote its image by $z_0\coloneqq h(O)$.
Since $O$ is a $(-1)$-curve, its contraction produces a smooth surface
$Z$, and $z_0=h(O)$ is a smooth point of $Z$. We claim that $Z$ is a del Pezzo surface of degree one. Since
\[
 K_S=h^*K_Z+O
 \quad\text{and}\quad
 K_S\sim-F,
\]
one has
\[
 h^*(-K_Z)\sim F+O.
\]
Note that all singular fibers of $S$ are irreducible. Therefore $(F+O)\cdot\Gamma>0$ for every irreducible curve $\Gamma\neq O$, while 
\begin{equation}\label{eq: intersections}
  (F+O)\cdot O=0,\quad (-K_Z)^2=(F+O)^2=1.
\end{equation} Thus $-K_Z$ is ample by the Nakai-Moishezon criterion.

Because $Z$ is rational, $\chi(\Oo_Z)=1$. Riemann-Roch and Kodaira vanishing applied to $-K_Z=K_Z+(-2K_Z)$ give
\[
h^0(Z,-K_Z)=1+K_Z^2=2,
\]
so \( |-K_Z| \) is a pencil. 

Since $\Oo_S(F)=r^*\Oo_{\Pp^1}(1)$, we have $|F|=\{F_t\mid t\in\Pp^1\}$ and
\[
H^0\bigl(\Pp^1,\Oo_{\Pp^1}(1)\bigr)
\simeq H^0\bigl(S,F\bigr)
\hookrightarrow H^0\bigl(S,F+O\bigr)
\simeq H^0\bigl(S,h^*(-K_Z)\bigr).
\]
Since
\[
h^0\bigl(\Pp^1,\Oo_{\Pp^1}(1)\bigr)
=2
=h^0\bigl(S,h^*(-K_Z)\bigr),
\]
we have
\[
h^*|-K_Z|
=|F|+O.
\]
Pushing forward by $h$, we obtain
\[
|-K_Z|=h_*|F|=\{h(F_t)\mid t\in\Pp^1\}.
\]
Every member contains $z_0=h(O)$, and images of distinct fibers are disjoint away
from $z_0$. 
Let
\[
 D\coloneqq h(F_b)\in|-K_Z|.
\]
Then, we have
\[
 h^*D=F_b+(F_b\cdot O)O=F_b+O,
\]
so $\operatorname{mult}_{z_0}(D)=F_b\cdot O=1$. Hence the curve $D$ is
smooth at $z_0$, the restriction $h|_{F_b}\colon F_b\to D$ is an isomorphism, and
$D$ is a smooth elliptic curve. 

Let \(K=\Cc(\Pp^1)\) and let \(E/K\) be the generic fiber of \(r\),
viewed as an elliptic curve over \(K\). Translation by
\(-nP\in E(K)\) defines an automorphism of \(E/K\), and hence a
birational map
\[
t_{-nP}\colon S\dashrightarrow S
\]
over \(\Pp^1\). Hence \(t_{-nP}\) extends to an automorphism of the relatively minimal surface $S$ over $\Pp^1$, and \(t_{-nP}(C_n)=O\). Define
\[
 p_n\coloneqq t_{-nP}(p)\in F_b,
 \qquad
 q_n\coloneqq h(p_n)\in D.
\]
The points \(p_n\) are pairwise distinct because \(P(b)\) is
non-torsion. The choice of \(p\) gives \(p_n\notin O\), so the points
\(q_n\) are pairwise distinct and different from \(z_0\).

After applying $t_{-nP}$, the contraction of $C_n$ on $X$ becomes the contraction of $O$ on $\operatorname{Bl}_{p_n}S$. Since $p_n\notin O$, blowing up $p_n$ and contracting $O$ commute. So we have a commutative diagram
\[
\begin{tikzcd}[column sep=large]
 X=\operatorname{Bl}_pS
   \arrow[r,"\sim"]
   \arrow[d,"\mu"']
   \arrow[rr,bend left=18,"f_n"]
 & \operatorname{Bl}_{p_n}S
   \arrow[r]
   \arrow[d]
 & Y_n=\operatorname{Bl}_{q_n}Z
   \arrow[d,"\sigma_n"] \\
 S
   \arrow[r,"t_{-nP}"']
 & S
   \arrow[r,"h"']
 & Z
\end{tikzcd}
\]
Here $f_n$ contracts $C_n$, and the diagram identifies its target with $Y_n=\operatorname{Bl}_{q_n}Z$.
By Proposition~\ref{prop:realize-negative-curve}, the contraction
\(X\to Y_n\) is the log canonical model of a klt pair on \(X\).

It remains to distinguish infinitely many of the surfaces \(Y_n\).
Using the above identification, write
\[
 \sigma_n\colon Y_n\to Z
\]
for the blow-up morphism, let $E_n$ be its exceptional curve, and let $D_n$
be the strict transform of $D$. This means that
\[
 |-K_{Y_n}|
 =
 \left\{
 \sigma_n^*B-E_n
 \,\middle|\,
 B\in|-K_Z|,\ q_n\in\operatorname{Supp}(B)
 \right\}.
\]
Exactly one member of $|-K_Z|$ passes through $q_n$, namely $D$; this holds for every $n\in\Zz$ because $q_n\ne z_0$. Since
$D$ is smooth at $q_n$, one has 
\begin{equation}\label{eq: D_n}
  D_n =\sigma_n^*D-E_n.
\end{equation} The curve $D_n$ is irreducible, and hence
\begin{equation}\label{eq:unique-anticanonical}
 |-K_{Y_n}|=\{D_n\}.
\end{equation}
Consequently, every isomorphism $Y_n\to Y_m$ maps $D_n$ to $D_m$.

For any $D'\in|-K_Z|$ with $D'\neq D$, we have
\[
 D\cdot D'=(-K_Z)^2=1
\] by \eqref{eq: intersections}.
Since both $D$ and $D'$ pass through $z_0$, their intersection
divisor on $D$ is $D'|_D=\{z_0\}$. It follows that
\[
 \Oo_D(D)
 \simeq
 \Oo_D(D')
 \simeq
 \Oo_D(z_0).
\]

Since $D$ is smooth at $q_n$, one has
$\operatorname{mult}_{q_n}(D)=1$. Hence $D_n$ meets $E_n$ transversely at
one point, and
\[
 \sigma_n|_{D_n}\colon D_n\xrightarrow{\sim}D
\]
is an isomorphism. Under this isomorphism, the point ${D_n}\cap {E_n}$
corresponds to $q_n$. By \eqref{eq: D_n}, we have
\[
 N_{D_n/Y_n}\simeq\Oo_{D_n}(D_n)
 \simeq(\sigma_n|_{D_n})^*\Oo_D(D-q_n).
\]
Identifying $D_n$ with $D$ via $\sigma_n|_{D_n}$, we obtain
\begin{equation}\label{eq:normal-anticanonical}
 N_{D_n/Y_n}
 \simeq
 \Oo_D(D-q_n)
 \simeq
\Oo_D(z_0-q_n)
 \in\Pic^0(D).
\end{equation}
If $Y_n\simeq Y_m$, restriction to the unique anticanonical curves
$D_n$ and $D_m$ gives an isomorphism $D_n\simeq D_m$. Consequently,
there exists an automorphism $\alpha\in\Aut(D)$ satisfying
\[
 \alpha^*\Oo_D(z_0-q_m)\simeq\Oo_D(z_0-q_n).
\]
Use $z_0$ as the origin and write
$\alpha=t_a\circ u$, where $t_a$ is a translation and
\[u \in \Aut(D,z_0) \coloneqq \{\sigma \in \Aut(D) \mid \sigma(z_0)=z_0\}.\] As $D$ is an elliptic curve, the group $\Aut(D,z_0)$ is finite. Translations
act trivially on $\Pic^0(D)$, and
\[
 D\longrightarrow\Pic^0(D),
 \qquad
 q\longmapsto\Oo_D(z_0-q),
\]
is an isomorphism. Hence the above relation is equivalent to
$q_m=u(q_n)$. Thus, for fixed $n$, the possible points $q_m$ lie in the
finite orbit $\Aut(D,z_0)\cdot q_n$. Since the points $q_m$ are pairwise
distinct, only finitely many $m$ satisfy $Y_n\simeq Y_m$. Consequently, each
isomorphism class is represented by only finitely many members of
$\{Y_i\}_{i\in\Zz}$. It follows that infinitely many of the surfaces $Y_i$
are pairwise non-isomorphic.
\end{proof}

\section{A canonically polarized threefold with infinitely many log canonical models}\label{sec:threefold}

In this section, we show that the direct higher-dimensional analogue
of Theorem~\ref{thm:minimal surfaces} is false. In fact, there exists a smooth threefold with ample canonical divisor which admits infinitely many log canonical models.

We begin with a local computation which will be used to distinguish the contraction targets in the construction. 

\begin{lemma}\label{lem:embedding-dimension}
Let $V$ be a smooth complex threefold, let $Y$ be a normal complex threefold, and let $C\simeq\Pp^1$ be a curve on $V$. Suppose that
\[
 f\colon V\to Y
\]
is a projective birational contraction with $\Exc(f)=C$, and set $y\coloneqq f(C)$. Assume that the formal completion of $V$ along $C$ is isomorphic to the formal completion of the zero section in
\[
 \Tot_{\Pp^1}\bigl(\Oo_{\Pp^1}(-1)\oplus\Oo_{\Pp^1}(-d)\bigr)
\]
for some integer $d\geq1$. Then $y$ is the unique singular point of $Y$ with embedding dimension
\[
 \edim_yY=d+3.
\]
In particular, $d$ is determined by the isomorphism class of $Y$.
\end{lemma}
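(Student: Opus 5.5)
Since $\Exc(f)=C$, the map $f$ is an isomorphism over $Y\setminus\{y\}$, so $Y$ is smooth away from $y$. It therefore suffices to prove $\edim_yY=d+3$. As $d\geq1$, this gives $\edim_yY\geq4>3$, so $y$ is singular, and it is then the unique singular point. The embedding dimension is $\dim_{\Cc}\mathfrak m_y/\mathfrak m_y^2$, which depends only on the completed local ring $\widehat{\Oo}_{Y,y}$.

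To compute $\widehat{\Oo}_{Y,y}$, I would use the theorem on formal functions. Since $Y$ is normal and $f$ is birational and proper, $f_*\Oo_V=\Oo_Y$. Hence
\[
\widehat{\Oo}_{Y,y}\simeq\varprojlim_k H^0\bigl(V,\Oo_V/\mathcal I^k\bigr)=H^0\bigl(\widehat V_C,\Oo\bigr),
\]
where $\mathcal I$ is any ideal with support $C$. Note that $f^{-1}(y)=C$ set-theoretically, so $f^{-1}(\mathfrak m_y)\Oo_V$ and the ideal of $C$ define the same formal completion. By hypothesis, the right-hand side equals the ring of formal functions along the zero section of $W:=\Tot_{\Pp^1}(\Oo(-1)\oplus\Oo(-d))$. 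On $W$ we have
\[
H^0(W,\Oo_W)=\bigoplus_{i,j\geq0}H^0\bigl(\Pp^1,\Oo(i+dj)\bigr),
\]
which is graded by the fiber coordinates, and the formal completion along the zero section is the completion of this ring with respect to the grading. So $\widehat{\Oo}_{Y,y}$ is the completion at the irrelevant ideal of the affine cone
\[
R:=\bigoplus_{i,j\geq0}H^0\bigl(\Pp^1,\Oo(i+dj)\bigr).
\]
Concretely, $R$ is the ring of the affine toric variety of the cone spanned by the rays of $W$, and $\Spec R$ is the contraction of the zero section of $W$.

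It remains to count minimal generators of $R$ as an algebra; this is the main computation. Write $s,t$ for coordinates on $\Pp^1$ and $u,v$ for the fiber coordinates of $\Oo(-1)$ and $\Oo(-d)$. Then $R$ is spanned by the monomials $s^at^bu^iv^j$ with $a+b=i+dj$. Degree $(1,0)$ contributes the two generators $su,tu$. Degree $(0,1)$ contributes the $d+1$ generators $s^at^{d-a}v$. A monomial with $i\geq1$ and $a\geq1$ or $b\geq1$ is divisible by $su$ or by $tu$. A monomial with $i=0$ and $j\geq2$ factors as a product of degree $(0,1)$ monomials. Hence the minimal generating set has exactly $2+(d+1)=d+3$ elements. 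Since $R$ is graded with $R_0=\Cc$, the minimal number of generators equals $\dim\mathfrak m/\mathfrak m^2$, and this is preserved under completion. This gives $\edim_yY=d+3$.

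The main obstacle is justifying the identification $\widehat{\Oo}_{Y,y}\simeq H^0(\widehat V_C,\Oo)$ together with the compatibility of the formal isomorphism with the topologies. This is the step where formal functions and $f_*\Oo_V=\Oo_Y$ are used; the generator count afterwards is a routine toric check. The final assertion of the lemma follows because the isomorphism class of $Y$ determines its unique singular point and that point's embedding dimension, hence determines $d$.
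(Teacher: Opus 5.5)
Your proposal is correct and follows essentially the same route as the paper. You use the theorem on formal functions, together with the cofinality of the powers of $\mathfrak m_y\Oo_V$ and of $\mathcal I_C$, to identify $\widehat{\Oo}_{Y,y}$ with the formal functions along the zero section, i.e.\ the completion of $R=\bigoplus_m H^0(\Pp^1,\Sym^m E^\vee)$; you then show $R$ is generated in degree one and count $\dim R_1=2+(d+1)=d+3$. The only differences are cosmetic: you state $f_*\Oo_V=\Oo_Y$ explicitly where the paper uses it tacitly, and you check degree-one generation by factoring monomials rather than via surjectivity of $H^0(\Oo(r))\otimes H^0(\Oo(s))\to H^0(\Oo(r+s))$.
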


\begin{proof}
The exceptional curve $C$ is contracted to the point $y$. Since $f$ has connected fibers, $f^{-1}(y)=C$ set-theoretically. The restriction $V\setminus C\to Y\setminus\{y\}$ is an isomorphism. Thus $Y$ is smooth away from $y$.

We compute the completed local ring at $y$. After replacing $Y$ by an affine neighborhood of $y$ and $V$ by its inverse image, let $\mathcal I_C$ be the ideal sheaf of $C$. The theorem on formal functions \cite[III, Theorem~11.1]{Har77} gives
\[
 \widehat{\Oo}_{Y,y}
 \simeq
 \varprojlim_n
 H^0\!\left(V,\Oo_V/\mathfrak m_y^{n+1}\Oo_V\right).
\]
Since $f^{-1}(y)=C$ set-theoretically, the ideals $\mathfrak m_y\Oo_V$ and $\mathcal I_C$ have the same radical. Because $V$ is noetherian, there are integers $a,b>0$ with $\mathcal I_C^a\subset\mathfrak m_y\Oo_V$ and $(\mathfrak m_y\Oo_V)^b\subset\mathcal I_C$. Their powers are cofinal, so the two inverse limits agree. Consequently,
\begin{equation}\label{eq:formal-functions-C}
 \widehat{\Oo}_{Y,y}
 \simeq
 H^0\!\left(\widehat V_C,\Oo_{\widehat V_C}\right).
\end{equation}

Put
\[
 E_d\coloneqq\Oo_{\Pp^1}(-1)\oplus\Oo_{\Pp^1}(-d),
 \qquad
 X_d\coloneqq\Tot_{\Pp^1}(E_d),
\]
and let $Z_d\simeq\Pp^1$ denote the zero section. Define
\[
 R_d\coloneqq\bigoplus_{m\geq0}
 H^0\!\left(\Pp^1,\Sym^mE_d^\vee\right),
 \qquad
 \mathfrak m_d\coloneqq\bigoplus_{m>0}(R_d)_m.
\]
Under $X_d=\spec_{\Pp^1}(\Sym E_d^\vee)$, the ideal $\mathcal I_{Z_d}$ is the kernel of the natural projection \[\Sym E_d^\vee\to\Oo_{\Pp^1}.\] Thus
\[
 \mathcal I_{Z_d}^{n+1}=\bigoplus_{m\geq n+1}\Sym^mE_d^\vee,
\]
and
\[
 H^0\!\left(X_d,\Oo_{X_d}/\mathcal I_{Z_d}^{n+1}\right)
 =
 \bigoplus_{m=0}^n
 H^0\!\left(\Pp^1,\Sym^mE_d^\vee\right).
\]
By the assumption of formal isomorphism and \eqref{eq:formal-functions-C}, it follows that
\begin{equation}\label{eq:completed-local-scroll-ring}
 \widehat{\Oo}_{Y,y}
 \simeq
 \varprojlim_n\bigoplus_{m=0}^n(R_d)_m.
\end{equation}

The graded ring $R_d$ is generated by its degree-one part. Indeed, we have
\begin{equation}\label{eq: R_d}
   (R_d)_m
 =
 \bigoplus_{a+b=m}
 H^0\!\left(\Pp^1,\Oo_{\Pp^1}(a+bd)\right).
\end{equation}
Moreover, the multiplication maps
\[
 H^0(\Pp^1,\Oo(r))\otimes H^0(\Pp^1,\Oo(s))
 \longrightarrow H^0(\Pp^1,\Oo(r+s))
\]
are surjective for $r,s\geq0$. Consequently,
\[
 \mathfrak m_d^{n+1}
 =
 \bigoplus_{m\geq n+1}(R_d)_m,
\]
so the inverse limit in \eqref{eq:completed-local-scroll-ring} is the $\mathfrak m_d$-adic completion of $(R_d)_{\mathfrak m_d}$. Completion does not change the cotangent-space dimension, and therefore, by \eqref{eq: R_d},
\[
 \edim_yY=\dim_{\Cc}\mathfrak m_d/\mathfrak m_d^2
 =h^0(\Pp^1,\Oo(1))+h^0(\Pp^1,\Oo(d))=d+3.
\]
Since $d\geq1$, this is strictly larger than $\dim Y=3$, so $y$ is singular. Hence $y$ is the unique singular point of $Y$.
\end{proof}

Now we give the construction of a canonically polarized threefold
that admits infinitely many log canonical models.

\begin{proof}[Proof of Theorem~\ref{thm:intro-threefold}]
Let $r\colon S\to\Pp^1$ be the rational elliptic surface constructed at
the beginning of Section~\ref{subsec: elliptic surfaces}, and retain its
infinitely many $(-1)$-sections $C_n=nP$. Choose a very ample divisor
$H$ on $S$ such that $K_S+H$ is ample. After passing to a subsequence
and relabeling its members as $\{C_i\}_{i\geq1}$, we may assume that
\begin{equation}\label{eq:di-increasing}
  d_i\coloneqq H\cdot C_i
\end{equation}
is strictly increasing. Indeed, for a fixed integer $d$, all sections satisfying $H\cdot C_i=d$ have Hilbert polynomial $P_d(n)=dn+1$. Let $\operatorname{Hilb}_{P_d}(S)$ be the projective Hilbert scheme parametrizing subschemes of $S$ with this polynomial. Its tangent space at $[C_i]$ is
\[
 T_{[C_i]}\operatorname{Hilb}_{P_d}(S)
 \simeq H^0(C_i,N_{C_i/S})
 =H^0(\Pp^1,\Oo_{\Pp^1}(-1))=0
\]
by \cite[Theorem~1.1(b)]{Har10}. The local maximal ideal at $[C_i]$ therefore satisfies $\mathfrak m/\mathfrak m^2=0$; Nakayama's lemma gives $\mathfrak m=0$, so $[C_i]$ is a reduced isolated point. A noetherian scheme has only finitely many isolated points, so only finitely many $C_i$ have a fixed $H$-degree. Hence the degrees are unbounded.

As mentioned in Section~\ref{sec:preliminaries}, we use Grothendieck's quotient convention for projective bundles and set
\[
 p\colon W\coloneqq\Pp_S(\Oo_S\oplus\Oo_S(H))\to S,
 \qquad
 \xi\coloneqq c_1(\Oo_W(1)).
\]
The quotient $\Oo_S\oplus\Oo_S(H)\twoheadrightarrow\Oo_S$ defines a section
\[
 T\simeq S\subset W
\]
satisfying
\begin{equation}\label{eq:T-normal}
 \Oo_W(1)|_T\simeq\Oo_T,
 \qquad
 N_{T/W}\simeq\Oo_T(-H),
\end{equation}
where the normal-bundle formula follows from the relative Euler sequence. In the above expression and what follows, we write $\Oo_T(-H)$ for the pullback of $\Oo_S(-H)$ along the natural morphism $T \to S$.

The bundle $\Oo_S\oplus\Oo_S(H)$ is globally generated, and the tautological quotient
\[
 p^*(\Oo_S\oplus\Oo_S(H))\twoheadrightarrow\Oo_W(1)
\]
shows that $\xi$ is base-point-free.
Moreover,
\[
 p_*\Oo_W(6\xi)
 =\Sym^6(\Oo_S\oplus\Oo_S(H))
\]
contains $\Oo_S$ as a direct summand, and restriction to $T$ is the projection onto this summand. Thus, the natural map
\[
 H^0(W,\Oo_W(6\xi))\longrightarrow H^0(T,\Oo_T)
\]
is surjective. 
By Bertini's theorem, a general such section defines a smooth divisor
\[
 \mathcal B\in|6\xi|
\]
disjoint from $T$. Indeed, we have $\Oo_W(6\xi)|_T\simeq\Oo_T$.

Let 
\[\rho\colon V\to W\] be the double cover branched along $\mathcal B$. To be precise, let $s_{\mathcal B}\in H^0(W,\Oo_W(6\xi))$ define $\mathcal B$, and set
\[
 V\coloneqq\spec_W\bigl(\Oo_W\oplus\Oo_W(-3\xi)\bigr),
\]
where multiplication on the second summand is induced by $s_{\mathcal B}$. Locally this has equation $z^2=s_{\mathcal B}$. Since the branch divisor is smooth, $V$ is a smooth projective threefold.

The canonical divisor of the cover is
\begin{equation}\label{eq:KV-first}
 K_V=\rho^*(K_W+3\xi).
\end{equation}
The projective bundle formula gives $K_W=-2\xi+p^*(K_S+H)$, and hence
\begin{equation}\label{eq:KV}
 K_V=\rho^*\bigl(\xi+p^*(K_S+H)\bigr).
\end{equation}

The divisor inside the pullback is ample. Indeed, tensoring
$\Oo_S\oplus\Oo_S(H)$ with $\Oo_S(K_S+H)$ gives a natural identification
\[
 \Pp_S(\Oo_S\oplus\Oo_S(H))
 \simeq
 \Pp_S\bigl(\Oo_S(K_S+H)\oplus\Oo_S(K_S+2H)\bigr),
\]
and the tautological divisor becomes $\xi+p^*(K_S+H)$. Both summands on the right are ample, so their direct sum is ample; see \cite[Proposition~6.1.13]{Laz04II}. Hence $\xi+p^*(K_S+H)$, and therefore $K_V$, is ample.

Because $\mathcal B$ is disjoint from $T$, the double cover is finite étale in a neighborhood of $T$. To describe its restriction, note that $\Oo_W(-3\xi)|_T\simeq\Oo_T$ and $s_{\mathcal B}|_T=c\in\Cc^*$. Hence
\[
 \rho_*\Oo_V|_T\simeq\Oo_T[z]/(z^2-c)\simeq\Oo_T\times\Oo_T.
\]
Therefore, we have a disjoint union
\begin{equation}\label{eq:split-over-T}
 \rho^{-1}(T)=T_+\sqcup T_- \text{~such that~}
 \rho|_{T_\pm}\colon T_\pm\xrightarrow{\sim}T.
\end{equation}

Let $j\colon S\xrightarrow{\sim}T$ be the section of $p$, and fix $T_+$. For $q=p \circ \rho$, the restriction $q|_{T_+}=p\circ\rho|_{T_+}$ identifies $T_+$ with $S$; denote its inverse by $\iota_+\colon S\xrightarrow{\sim}T_+$. Since $\rho$ is étale along $T_+$, \eqref{eq:T-normal} gives
\begin{equation}\label{eq:Tplus-normal}
 N_{T_+/V}\simeq(q|_{T_+})^*\Oo_S(-H).
\end{equation}
To distinguish the copies of a section, put
\[
 C_i^T\coloneqq j(C_i)\subset T,
 \qquad
 \Gamma_i\coloneqq\iota_+(C_i)\subset T_+.
\]
Note that $C_i\subset S$, $C_i^T\subset T$, and $\Gamma_i\subset T_+$ are different curves related by the displayed isomorphisms. Since $N_{C_i/S}\simeq\Oo_{\Pp^1}(-1)$, we have $N_{\Gamma_i/T_+}\simeq\Oo_{\Pp^1}(-1)$. The normal-bundle sequence for the regular immersions $\Gamma_i\subset T_+\subset V$ (see \cite[\href{https://stacks.math.columbia.edu/tag/063N}{Tag~063N}]{Stacks}) is
\[
 0\longrightarrow N_{\Gamma_i/T_+}
 \longrightarrow N_{\Gamma_i/V}
 \longrightarrow\Oo_{\Pp^1}(-d_i)
 \longrightarrow0.
\]
Since
\[
 \Ext^1\bigl(\Oo_{\Pp^1}(-d_i),\Oo_{\Pp^1}(-1)\bigr)
 =H^1(\Pp^1,\Oo_{\Pp^1}(d_i-1))=0,
\]
the sequence splits, and hence
\begin{equation}\label{eq:normal-Ci-V}
 N_{\Gamma_i/V}
 \simeq
 \Oo_{\Pp^1}(-1)\oplus\Oo_{\Pp^1}(-d_i).
\end{equation}

We next construct a semiample divisor that contracts exactly $\Gamma_i$. Let
\[
 h_i\colon S\to Z_i
\]
be the contraction of the $(-1)$-curve $C_i$. Choose an ample Cartier divisor $A_i$ on $Z_i$ and put
\[
 P_i\coloneqq h_i^*A_i.
\]
Then $P_i$ is nef, and for an irreducible curve $\Gamma\subset S$ we have
\begin{equation}\label{eq:Pi-zero-curves}
 P_i\cdot\Gamma=0
 \quad\Longleftrightarrow\quad
 \Gamma=C_i.
\end{equation}

The morphisms used below fit into the commutative diagram
\[
\begin{tikzcd}[column sep=large,row sep=small]
T_+ \arrow[r,hook] \arrow[d,"q|_{T_+}"'] &
V \arrow[r,"\rho"] \arrow[d,"q"] &
W \arrow[d,"p"]\\
S \arrow[r,equal] \arrow[d,"h_i"'] &
S \arrow[r,equal] \arrow[d,"h_i"] &
S \arrow[d,"h_i"]\\
Z_i \arrow[r,equal] & Z_i \arrow[r,equal] & Z_i.
\end{tikzcd}
\]

Let $\Xi\coloneqq\rho^*\xi$, and define
\begin{equation}\label{eq:Mi}
 M_i\coloneqq\Xi+q^*(H+P_i)+T_+.
\end{equation}
The divisor
\[
 A_i'\coloneqq\Xi+q^*(H+P_i)
\]
is ample. Indeed, it is the finite pullback of the tautological divisor associated with
\[
 \Oo_S(H+P_i)\oplus\Oo_S(2H+P_i),
\]
whose two summands are ample. If an irreducible curve $\Gamma$ is not contained in $T_+$, then $T_+\cdot\Gamma\geq0$, and hence
\[
 M_i\cdot\Gamma\geq A_i'\cdot\Gamma>0.
\]
On the other hand, using $\Xi|_{T_+}=0$ and \eqref{eq:Tplus-normal}, we obtain
\begin{equation}\label{eq:Mi-restriction}
 \iota_+^*(M_i|_{T_+})
 =(H+P_i)-H
 =P_i.
\end{equation}
Thus $M_i$ is nef. It is also big, since $M_i=A_i'+T_+$ is the sum of an ample divisor and an effective divisor.

We claim that
\begin{equation}\label{eq:zero-face}
 \NE(V)\cap M_i^\perp=\Rr_{\geq0}[\Gamma_i].
\end{equation}
The inclusion $\supseteq$ follows from \eqref{eq:Mi-restriction} as $P_i\cdot C_i=0$. Let $\alpha\in\NE(V)$ satisfy $M_i\cdot\alpha=0$, and choose effective one-cycles $\theta_n$ whose numerical classes converge to $\alpha$. Write
\[
 \theta_n=\theta_n^++\theta_n^-,
\]
where every component of $\theta_n^+$ is contained in $T_+$ and no component of $\theta_n^-$ is contained in $T_+$. Since $M_i$ is nef and $M_i\cdot\theta_n\to0$, we have
\[
 0\leq A_i'\cdot\theta_n^-
 \leq M_i\cdot\theta_n^-
 \longrightarrow0.
\]
As $A_i'$ is ample, this implies $[\theta_n^-]\to0$ in $N_1(V)$. The pushforward $N_1(T_+)\to N_1(V)$ is injective because $q|_{T_+}\colon T_+\to S$ is an isomorphism and $q_*\circ(\iota_+)_*=\Id$. Hence the pushforward classes of $\theta_n^+$ converge to the pushforward of the class $\beta\coloneqq q_*\alpha\in\NE(S)$. By \eqref{eq:Mi-restriction},
\[
 P_i\cdot\beta=0.
\]
The zero face of $P_i=h_i^*A_i$ is generated by the unique curve contracted by $h_i$, namely $C_i$. Thus the corresponding class on $V$ lies in $\Rr_{\geq0}[\Gamma_i]$, proving \eqref{eq:zero-face}.

Adjunction gives $K_{T_+}=(K_V+T_+)|_{T_+}$. Under $T_+\simeq S$, equation \eqref{eq:Tplus-normal} says $T_+|_{T_+}=-(q|_{T_+})^*H$, and therefore
\[
 K_V|_{T_+}=(q|_{T_+})^*(K_S+H).
\]
Since $C_i$ is a $(-1)$-curve, $K_S\cdot C_i=-1$, and therefore
\[
 K_V\cdot\Gamma_i=d_i-1,
 \qquad
 T_+\cdot\Gamma_i=-d_i.
\]
Set
\[
 a_i\coloneqq1-\frac{1}{2d_i}\in(0,1).
\]
Then
\begin{equation}\label{eq:negative-pair-Ci}
 (K_V+a_iT_+)\cdot\Gamma_i=-\frac12.
\end{equation}
We claim that for every sufficiently large integer $t_i$ the divisor
\begin{equation}\label{eq:Li}
 L_i\coloneqq t_iM_i-(K_V+a_iT_+)
\end{equation}
is ample. To see this, fix an ample divisor $B$ on $V$ and consider the compact slice
\[
 \Sigma\coloneqq\{\alpha\in\NE(V)\mid B\cdot\alpha=1\}.
\]
By \eqref{eq:zero-face}, the zero locus of $M_i$ on $\Sigma$ is the class $\lambda_i[\Gamma_i]$, where $\lambda_i=(B\cdot\Gamma_i)^{-1}$. By \eqref{eq:negative-pair-Ci}, the divisor $-(K_V+a_iT_+)$ is positive at this point, and hence on a neighborhood $U$ of it in $\Sigma$. On the compact set $\Sigma\setminus U$, the function $M_i$ has a positive minimum, whereas $-(K_V+a_iT_+)$ is bounded below. It follows that $L_i$ is positive on all of $\Sigma$ for $t_i\gg0$. Kleiman's criterion then shows that $L_i$ is ample.

The pair $(V,a_iT_+)$ is klt. Since $M_i$ is nef Cartier and
\[
 t_iM_i-(K_V+a_iT_+)=L_i
\]
is ample, the base-point-free theorem \cite[Theorem~3.3]{KM98} shows that $M_i$ is semiample. Let
\[
 f_i\colon V\to Y_i
\]
be the contraction morphism induced by $M_i$. Then $Y_i$ is normal and
\[
 M_i\sim_{\Qq}f_i^*B_i
\]
for some ample $\Qq$-Cartier divisor $B_i$ on $Y_i$. Since $M_i$ is big,
$f_i$ is birational. By \eqref{eq:zero-face} and the preceding discussion,
$\Gamma_i$ is the only curve contracted by $f_i$; hence
\begin{equation}\label{eq:exceptional-locus-fi}
 \Exc(f_i)=\Gamma_i.
\end{equation}
Set $y_i\coloneqq f_i(\Gamma_i)$.

Choose a sufficiently divisible integer $m_i\geq2$ such that $m_iL_i$ is Cartier and very ample. Take a general member
\[
 G_i\in|m_iL_i|.
\]
By Bertini's theorem, $G_i$ is smooth and meets $T_+$ transversely. Put
\[
 \Delta_i\coloneqq a_iT_++\frac1{m_i}G_i.
\]
The support of $\Delta_i$ has simple normal crossings, so $(V,\Delta_i)$ is klt. Moreover,
\[
 K_V+\Delta_i
 \sim_{\Qq}
 K_V+a_iT_++L_i
 =t_iM_i.
\]
Thus
\[
 K_V+\Delta_i\sim_{\Qq}f_i^*(t_iB_i),
\]
where $t_iB_i$ is ample on $Y_i$. Therefore $f_i$ is the log canonical model of $(V,\Delta_i)$. In particular,
\[
 [Y_i]\in\CC^3(V).
\]

It remains to distinguish the targets $Y_i$. We identify the formal neighborhood of $\Gamma_i$ in $V$. The contraction $h_i$ maps the $(-1)$-curve $C_i$ to a smooth point, so locally it is the blow-up of that point. The standard morphism
\[
 \Tot_{\Pp^1}\Oo_{\Pp^1}(-1)=\operatorname{Bl}_0\mathbb A^2\longrightarrow\mathbb A^2
\]
contracts its zero section. Therefore, if $p_0\colon\Tot_{\Pp^1}\Oo_{\Pp^1}(-1)\to\Pp^1$ is the bundle projection, then we have the isomorphism of the formal completions
\begin{equation}\label{eq:formal-surface-model}
 \widehat S_{C_i}\simeq
 \widehat{\Tot_{\Pp^1}\Oo_{\Pp^1}(-1)}_{\Pp^1}.
\end{equation}

Let $\mathcal I_i$ be the ideal of $C_i$ in $S$ and let $C_i^{(n)}$ be defined by $\mathcal I_i^{n+1}$. Since
\[
 \mathcal I_i^n/\mathcal I_i^{n+1}\simeq\Oo_{\Pp^1}(n),
\]
the unit sequences for successive thickenings \cite[III, Exercise~4.6]{Har77}, together with vanishings \[H^1(\Pp^1,\Oo(n))=H^2(\Pp^1,\Oo(n))=0,\] show that $\Pic(C_i^{(n)})\to\Pic(C_i^{(n-1)})$ is an isomorphism. By \cite[II, Exercise~9.6]{Har77}, restriction along the canonical closed immersion $C_i\hookrightarrow\widehat S_{C_i}$ gives
\begin{equation}\label{eq:formal-picard}
 \Pic(\widehat S_{C_i})\simeq
 \varprojlim_n\Pic(C_i^{(n)})\simeq\Pic(C_i).
\end{equation}

Let
\[
\nu_i\colon
\widehat S_{C_i}
\xrightarrow{\sim}
\widehat{\Tot_{\Pp^1}\Oo_{\Pp^1}(-1)}_{\Pp^1}
\to
\Tot_{\Pp^1}\Oo_{\Pp^1}(-1)
\]
be the morphism induced by \eqref{eq:formal-surface-model}, where the
second arrow is the canonical morphism from the formal completion.
Since
\[
\Oo_S(-H)|_{C_i}\simeq\Oo_{\Pp^1}(-d_i),
\]
equations \eqref{eq:formal-surface-model} and
\eqref{eq:formal-picard} yield
\begin{equation}\label{eq:formal-line-bundle}
\Oo_S(-H)|_{\widehat S_{C_i}}
\simeq
\nu_i^*p_0^*\Oo_{\Pp^1}(-d_i).
\end{equation}

Let $e_0$ denote the homogeneous section of $\Oo_W(1)$ induced by the inclusion \[\Oo_S \to \Oo_S\oplus\Oo_S(H).\] The relative affine open set $D_+(e_0)\subset W$ containing $T$ is $\Tot_S\Oo_S(-H)$, with $T$ as its zero section. Locally, if $C_i$ is defined by $I\subset A$ and the line bundle is trivial, completing its total space along the zero section over $C_i$ gives
\[
 \widehat{A[z]}_{(I,z)}\simeq\widehat A_I[[z]].
\]
Thus the formal total space depends only on the formal base and the restricted line bundle. Using \eqref{eq:formal-surface-model} and \eqref{eq:formal-line-bundle}, we obtain
\begin{align}
 \widehat W_{C_i^T}
 &\simeq \widehat{\Tot_S\Oo_S(-H)}_{C_i}\notag\\
 &\simeq
 \widehat{\Tot_{\Pp^1}
 (\Oo_{\Pp^1}(-1)\oplus\Oo_{\Pp^1}(-d_i))}_{\Pp^1}.
 \label{eq:formal-neighborhood-W}
\end{align}

It remains to transfer this model through the double cover. Let $T^{(n)}$ be the $n$-th infinitesimal neighborhood of $T$ in $W$. Finite \'etale covers are invariant under nilpotent thickenings \cite[\href{https://stacks.math.columbia.edu/tag/0BQB}{Tag~0BQB}]{Stacks}. The split cover \eqref{eq:split-over-T} therefore extends uniquely and compatibly to
\[
 V\times_WT^{(n)}\simeq T^{(n)}\sqcup T^{(n)}
 \qquad(n\geq0).
\]
Passing to the inverse limit gives
\[
 \widehat V_{T_+\sqcup T_-}\simeq\widehat W_T\sqcup\widehat W_T,
 \qquad
 \widehat V_{T_+}\simeq\widehat W_T.
\]
Completing this last isomorphism further along $\Gamma_i\subset T_+$, and using transitivity of completion, gives
\begin{equation}\label{eq:formal-neighborhood-Ci}
 \widehat V_{\Gamma_i}
 \simeq
 \widehat{\Tot_{\Pp^1}
 (\Oo_{\Pp^1}(-1)\oplus\Oo_{\Pp^1}(-d_i))}_{\Pp^1}.
\end{equation}

By \eqref{eq:exceptional-locus-fi}, \eqref{eq:formal-neighborhood-Ci}, and Lemma~\ref{lem:embedding-dimension}, the point $y_i$ is the unique singular point of $Y_i$, and
\[
 \edim_{y_i}Y_i=d_i+3.
\]
If $Y_i\simeq Y_j$, then an isomorphism must send the unique singular point $y_i$ to $y_j$. Since embedding dimension is invariant under isomorphism, this would imply $d_i=d_j$. The integers $d_i$ are strictly increasing by \eqref{eq:di-increasing}; hence the varieties $Y_i$ are pairwise non-isomorphic. Therefore $\CC^3(V)$ is infinite.
\end{proof}

\begin{remark}
In the above construction, the double cover makes the resulting threefold canonically
polarized. It is not needed to produce infinitely many log canonical
models: the same construction can be carried out directly
on \(W\).
\end{remark}

\begin{remark}\label{rmk:Q-fact and terminal not enough}
The counterexample in Theorem~\ref{thm:intro-threefold} can be strengthened by taking the flips of the small contractions $f_i\colon V\to Y_i$. Since formally \(N_{\Gamma_i/V}\simeq \mathcal O_{\mathbb P^1}(-1)\oplus\mathcal O_{\mathbb P^1}(-d_i)\), the flipped variety \(V_i^+\) has a unique cyclic quotient singularity of type
\[
\frac1{d_i}(1,1,d_i-1),
\]
which is terminal and has canonical index \(d_i\). Hence the \(V_i^+\) are \(\mathbb Q\)-factorial terminal and pairwise non-isomorphic. After adding a sufficiently ample pullback to the boundary, each \(V_i^+\) is realized as a log canonical model of a klt pair on \(V\). Thus, the assumption of \(\mathbb Q\)-factorial terminal singularities is insufficient to obtain finiteness.
\end{remark}
 
\bibliographystyle{alpha}
\bibliography{bibfile}

\end{document}